\newif\ifsiamart
\newcommand{\email}[1]{\href{mailto:#1}{#1}}
\DeclareMathOperator{\supp}{supp}
\def\A{\mathcal A}
\def\B{\mathcal B}
\def\C{\mathcal C}
\def\D{\mathcal D}
\def\Ds{\mathscr{D}}
\def\Rs{\mathscr{R}}
\def\DD{\bar{\mathcal{D}}}
\def\Hc{\mathcal H}
\def\K{\mathcal K}
\def\L{\mathcal L}
\def\M{\mathcal M}
\def\V{\mathcal V}
\def\R{\mathbb R}
\newcommand{\X}{\mathcal{X}}
\def\Xs{\mathscr X}
\newcommand{\wy}{w_y}
\newcommand{\Y}{\mathcal{Y}}
\def\Ys{\mathscr Y}
\newcommand{\U}{\mathcal{U}}
\def\Us{\mathscr U}
\newcommand{\blkdiag}{\text{blkdiag}}
\newtheorem{theorem}{Theorem}
\newtheorem{remark}{Remark}
\newtheorem{definition}{Definition}
\newtheorem{example}{Example}
\DeclareMathOperator{\Id}{I}
\DeclareMathOperator*{\argmin}{arg\,min}
\renewcommand{\d}{\mathrm{d}}
\newcommand{\norm}[1]{\lVert#1\rVert}
\newcommand{\lec}[1]{\textcolor{teal}{{#1}}}
\def\<#1,#2>{\langle #1,\,#2\rangle}
\newcommand{\pCLMs}{\theta}
\newcommand{\pk}{\vartheta}
\newcommand{\px}{\theta_x}
\newcommand{\pkx}{\vartheta_x}
\newcommand{\pu}{\theta_u}
\newcommand{\pku}{\vartheta_u}
\newcommand{\pxx}{\theta_{xx}}
\newcommand{\pux}{\theta_{ux}}
\newcommand{\pxy}{\theta_{xy}}
\newcommand{\puy}{\theta_{uy}}
\newif\ifextended
\newacronym{MPC}{MPC}{model predictive control}
\newacronym{SLS}{SLS}{system level synthesis}
\newacronym{SLP}{SLP}{system level parameterization}
\newacronym{CLM}{CLM}{closed-loop map}
\newacronym{PDE}{PDE}{partial differential equation}
\let\oldparagraph=\paragraph
\renewcommand\paragraph[1]{\oldparagraph{#1.}}
\begin{document}
\title{Convex Constrained Controller Synthesis for Evolution Equations}

\ifsiamart
    \author{%
    }
\else
    \author[1]{Lauren Conger}
    \author[2]{Antoine P. Leeman}
    \author[1]{Franca Hoffmann}
    \affil[ ]{\footnotesize \email{lconger@caltech.edu},
        \email{aleeman@ethz.ch},
    \email{franca.hoffmann@caltech.edu}}
    \affil[1]{\footnotesize Department of Computing and Mathematical Sciences, Caltech}
    \affil[2]{\footnotesize Institute for Dynamic Systems and Control, ETH Z\"urich}
    \date{}
\fi

\maketitle

\begin{abstract}
We propose a convex controller synthesis framework for a large class of constrained linear systems, including those described by (deterministic and stochastic) partial differential equations and integral equations, commonly used in fluid dynamics, thermo-mechanical systems, quantum control, or transportation networks. 
Most existing control techniques rely on a (finite-dimensional) discrete description of the system, via ordinary differential equations. Here, we work instead with more general (infinite-dimensional) Hilbert spaces.
This enables the discretization to be applied after the optimization (optimize-then-discretize).
Using output-feedback System Level Synthesis (SLS), we formulate the controller synthesis as a convex optimization problem. 
Structural constraints like sensor and communication delays, and locality constraints, are incorporated while preserving convexity, allowing parallel implementation and extending key SLS properties to infinite dimensions.
The proposed approach and its benefits are demonstrated on a linear Boltzmann equation.
\end{abstract}
\glsresetall % Resets all terms

\section{Introduction}
Many dynamical systems are described by evolution equations and have inherently continuous state spaces, such as fluid dynamics and plasma behavior in fusion reactors \cite{cattafesta_actuators_2011}, heat transfer \cite{manzoni_quadratic_2021}, aerospace applications \cite{joslin_aircraft_1998}, and bacterial movement \cite{perthame_transport_2007}.
For control of wind farms \cite{shapiro_windfarm}, considering the wind velocity as the state, one can maximize the power.
In these setting, computing controllers is particularly challenging due to the infinite-dimensional nature of the state.
One set of current control techniques largely relies on a discretization of the system, resulting in an ordinary differential equation (ODE) \cite{Hinze2012}. 
Although this traditional technique has proven useful for many evolution systems, the discretization generates a large number of states, and leads to an optimization problem often too costly to solve.
\begin{figure}[!ht]
    \centering
    \includegraphics[trim= 141 40 178 5,clip, width=0.6\linewidth]{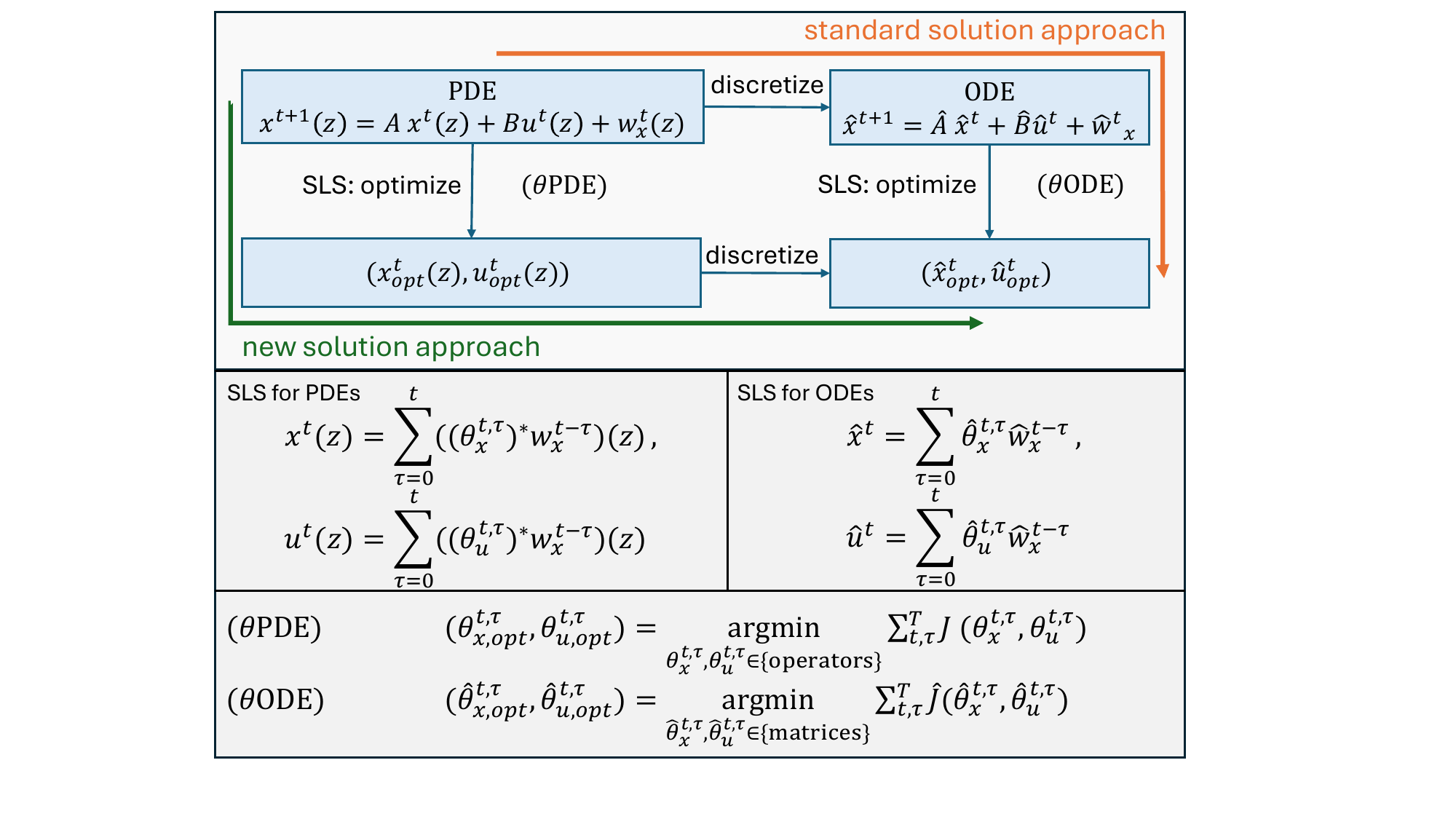}
    \caption{Traditional control approaches discretize PDE dynamics and then solve a finite-dimensional optimal control problem (discretize-then-optimize). We propose optimizing in the infinite-dimensional space, and then discretizing the solution (optimize-then-discretize). This requires an infinite-dimensional \gls{SLS} framework.}
    \label{fig:optimize_discretize_diagram}
\end{figure}
A second set of methods to solve evolution equation control problems includes techniques used for boundary control \cite{smyshlyaev_adaptive_2010}. Recent progress in this area leverages 
\textit{backstepping}, in which a change of variables allows for a closed-form solution of the boundary controller \cite{ascencio_backstepping_2018}, e.g., for specific dynamics \cite{troltzsch2010optimal} such as the reaction-diffusion equation \cite{ayamou_finite-dimensional_2024,si_boundary_2018,vazquez_boundary_2019}.
Beyond boundary, or system-specific control, a third, older set of results surrounds more general control in Hilbert spaces. 
 While classical work exists on adaptive control \cite{wen_robust_1989}, robust control \cite{venkatesh_robust_2000}, controllability \cite{slemrod_note_1974}, and stabilizing operators \cite{gibson_riccati_1979},
 practical tools are still needed to bridge the gap between theory and applications.
 
To address some of the challenges in these three sets of approaches to \gls{PDE} control, we present a convex controller synthesis method for a large class of linear systems, including \glspl{PDE} and integral equations, using ideas from system level synthesis (\gls{SLS}), a popular robust and distributed control tool \cite{wang_separable_2018, chen2024robust, anderson2019system, LEEMAN2024_fastSLS, dean2020robust}, and generalize results from \cite{jensen_topics_2020,bamieh_distributed_2002} by formulating solutions in the weak sense (broadening the classes of evolution equations and solutions) and numerically considering the setting of non-spatially invariant systems.
\gls{SLS} 
was recently developed to parameterize state- and output-feedback controllers as a convex optimization problem, allowing structural constraints while preserving convexity \cite{anderson2019system,wang_separable_2018,conger_characterizing_2024}.
Our method mitigates discretization issues by optimizing directly on infinite-dimensional Hilbert spaces; in general, optimizing and discretizing do not necessarily commute \cite{liu_non-commutative_2019}, and instead of optimizing over a large discrete state space, we use an optimize-then-discretize approach (see Figure~\ref{fig:optimize_discretize_diagram}). This requires conceptualization of the \gls{SLS} framework in infinite dimensions, pushing beyond the current finite-dimensional theory
and serving both more accurate theoretical analysis and expected practical performance gains. 

\textit{Contributions:} We propose a convex (output-feedback) controller parametrization for deterministic or stochastic evolution equations in infinite-dimensional spaces, incorporating convex structural constraints such as locality, sensor delay, and communication delays. 
We prove that optimizing over convex closed-loop maps is equivalent to optimizing over linear feedback controllers, which may not be convex.
On a numerical implementation using an integral equation, we show how orthonormal basis functions reduce the problem to a finite-dimensional optimization of coefficients, with subproblems solved in parallel.
This work lays the foundation for several natural generalizations, including time-varying operators in the dynamics and \gls{SLS} for continuous-time evolution equations.

\section{Problem Setup}
\subsection{Notation}
Let $\<\cdot,\cdot>_\Hc$ denote the inner product in a real Hilbert space $\Hc$ and its topological dual $\Hc^*:=\{f:\Hc\to\R \text{ continuous, linear functions}\}$. For a linear bounded operator $A:\Ds(A)\subseteq \Hc_1\to \Hc_2$ between two Hilbert spaces with domain $\Ds(A)$ and range $\Rs(A)$, its dual operator $A^*:\Ds(A^*)\subseteq\Hc_2^* \to \Hc_1^*$ maps $f(\cdot)\in\Hc_2^*$ to the function $g(\cdot)=f(A\cdot)\in \Hc_1^*$. The Riesz map allows us to identify $A^*$ with the adjoint $A':\Ds(A')\subseteq\Hc_2\to\Hc_1$ of $A$, which is defined via the relation $\<x_2,Ax_1>_{\Hc_2}=\<A'x_2,x_1>_{\Hc_1}$ for all $x_1\in\Ds(A)\subseteq\Hc_1, x_2\in \Ds(A')\subseteq\Hc_2$.  Thanks to the Riesz map, $\Hc$ is isometrically isomorphic to $\Hc^*$. From now on, we identify $\Hc^*$ with $\Hc$ and $A'$ with $A^*$. We remark that the operator domain is part of the definition of the operator itself, and therefore $A$ and $\Ds(A)$ are always stated together.

Throughout, we will use that the state $x^t\in\X$, input $u^t\in\U$, output $y^t\in\Y$, disturbance $w_x^t\in\X$, and observation noise $w_y^t\in\Y$ at time $t$ are in Hilbert spaces. 
For a Hilbert space $\X$ and discrete times $t\in\{0,\dots,T\}$, we use the script $\Xs \coloneqq \X^{\otimes(T+1)}$ to denote the $T+1$ dimensional vector of elements $x^0,\dots,x^T\in\X$. This is again a Hilbert space when endowed with the inner product $\<x,\bar x>_\Xs:=\sum_{t=0}^T\<x^t,\bar x^t>_\X$. In the context of \glspl{PDE}, a very useful Hilbert space is the Sobolev space $H^k(\Omega)$ with $\Omega\subseteq\R^d$ denoting the class of functions whose weak derivatives exist up to order $k$ and are in $L^2(\Omega):=\{f:\Omega\to\R\,s.t.\, \int_\Omega |f(z)|^2\d z<\infty\}$. Finally, we denote by $\delta_{z_0}(\cdot)$ the Dirac delta at $z_0$.

\subsection{Strong and Weak Forms of the Dynamics}
Consider the discrete-time dynamics 
\begin{subequations}\label{root_eq:strong_dynamics}
    \begin{align}
    x^{t+1} &= A^* x^t + B^* u^t + w_x^t \,, \label{root_eq:strong_dynamics:a}\\
    y^t &= C^* x^t + w_y^t \,.    
\end{align}
\end{subequations}
with operators
\begin{align*}
        \text{Dynamics } \quad 
        A^*:\Ds(A^*) \to \X\,, \quad &\Ds(A^*)\subseteq \X \,,\\
        \text{Control } \quad
    B^*:\Ds(B^*) \to \X\,,\quad  &\Ds(B^*)\subseteq \U \,,\\
    \text{Observation } \quad
    C^*:\Ds(C^*) \to \Y\,, \quad &\Ds(C^*)\subseteq \X \,.
\end{align*}
The Hilbert spaces $\X,\U,\Y$ may be finite- or infinite-dimensional\footnote{An alternative is to use general dual spaces, going beyond Hilbert spaces.}. 
In infinite dimensions, a large class of \glspl{PDE} and integral equations can be expressed as \eqref{root_eq:strong_dynamics} for suitable choices of operators $A^*, B^*$ and $C^*$.
If $\{x^t\}_{t=0}^T=\{x^t(\cdot)\}_{t=0}^T$ and $\{w_x^t\}_{t=0}^T=\{w_x^t(\cdot)\}_{t=0}^T$ are functions over some space $\Omega$, Equation \eqref{root_eq:strong_dynamics:a} is said to hold \emph{strongly}, i.e. $x^{t+1}(z) = (A^* x^t)(z) + (B^* u^t)(z) + w_x^t(z) $ pointwise for all $z\in\Omega$. However, this notion of solution to \eqref{root_eq:strong_dynamics} may be too restrictive in many application settings. For instance, one may want to consider cases where \eqref{root_eq:strong_dynamics:a} is the time-distretization of an evolution equation 
\begin{equation}\label{eq:PDE-cont-t}
    \partial_tx(t,z) = L^*x(t,z)+B^*u(t,z)+w_x(t,z)\,,
\end{equation}
for some differential operator $L^*:\Ds(L^*)\subseteq\X\to\X$ (for Forward Euler discretization with time step $\Delta t$, one would recover \eqref{root_eq:strong_dynamics:a} by setting $A^*=\Id+\Delta t L^*$). If $(L^*x)(z):=v\cdot\nabla_z x(z)$ is the advection operator for some fixed velocity field $v\in\R^d$ and $B^*=0$, $w_x=0$, then a step function at the origin, $x^0(z):=\text{sign}(z)+1$, is propagated through the dynamics and remains a step function at all times. This solution, which is physically relevant, does not solve \eqref{eq:PDE-cont-t} strongly since a step function is not differentiable in the classical sense. Or, consider for instance dynamics $L^*$ such that the solution $x(t,z)$ develops shocks or discontinuities over time, loosing differentiability. To give a sense to 'solving \eqref{root_eq:strong_dynamics}' in these cases, we use a more general notion of solution: $(x^t,u^t)_{t=0}^T$ solves \eqref{root_eq:strong_dynamics} \emph{weakly} if
for all $f \in \Ds(A) \cap \Ds(B)\subseteq \X$ and $g \in \Ds(C) \subseteq \Y$,
\begin{subequations}\label{root_eq:weak_dynamics}
    \begin{align}
    \<x^{t+1},f>_\X &= \<x^t,A f>_\X + \<u^t, B f>_\U + \<w^t_x, f>_\X  \,,\label{root_eq:weak_dynamics_state} \\
    \<y^t,g>_\Y &= \<x^t,C g>_\X  + \<w^t_y, g>_\Y. \label{root_eq:weak_dynamics_output}
\end{align}
\end{subequations}
Here, $f,g$ are considered test functions, and applying differential operators to test functions instead of the solution itself allows for less regular solutions, see Example~\ref{example:H1}.
We will use the weak notion of solution throughout this paper, and all pointwise statements are thus to be understood in the almost everywhere sense.
\begin{example}
\label{example:finite-dim}
In finite dimensions, the weak and strong forms are equivalent, so our framework reduces to the standard \gls{SLS} setting. When $\X=\R^{n_x}$, $\U=\R^{n_u}$, and $\Y=\R^{n_y}$, the weak form of the dynamics \eqref{root_eq:weak_dynamics} reduces to the standard (strong form) finite-dimensional control setting, since $\X$, $\U$ and $\Y$ are Hilbert spaces when equipped with the $2$-norm. Observe that the domains of $A^*\in\R^{n_x \times n_x}$, $B^* \in \R^{n_x \times n_u}$, and $C^*\in\R^{n_y \times n_x }$ are the entire spaces $\X$ and $\U$, which is not necessarily true in the infinite dimensional setting. 
\end{example}

\begin{example}\label{example:H1}
    Consider $\X=\U=L^2(\R)$, and the dynamics
        \begin{align*}
        x^{t+1}(z) = \partial_z x^t (z) + \int b(z-z') u^t(z') \d z' \,,
    \end{align*}
    for $z\in\R$, $b\in L^2(\R)$ with $b(-z)=b(z)$.
    This corresponds to the strong form of the dynamics \eqref{root_eq:strong_dynamics:a} with $A^*=\partial_z$ the differentiation operator and $B^*$ the convolution operator. Here, 
$\Ds(A^*)=H^1(\R):=\{f\in L^2(\R)\,s.t. \int_\R|\partial_zf(z)|^2\d z<\infty\}$ and $B^*u^t = b\ast u^t$ with $\Ds(B^*)=\U$. The weak form \eqref{root_eq:weak_dynamics_state} of the dynamics is
    \begin{align*}
        \<x^{t+1},f>_{\X} &= -\<x^t,\partial_z f>_{\X} + \<u^t, b \ast f>_\U \,,
    \end{align*}
    for all $f\in\Ds(A)=H^1(\R)$, where $A=-\partial_z$ and $Bf=b\ast f$. Explicitly writing the $L^2$ inner products, the above means
    \begin{align*}
        & \int x^{t+1}(z) f(z) \d z = -\int x^t(z)\partial_z f(z) \d z   + \iint u^t(z) b(z-z') f(z') \d z \d z'\,, \quad \forall\ f \in H^1(\R)\,.
    \end{align*}
    The strong form requires $x^t\in H^1(\R)$ to be differentiable, whereas for the weak form $x^t\in L^2(\R)$ is sufficient, shifting regularity requirements to the test functions $f\in H^1(\R)$ instead. In this way the weak form gives sense to a non-differentiable function $x^t$ solving a differential equation. 
\end{example}
\subsection{System Level Synthesis}
Finite-dimensional \gls{SLS} can be used to optimize a linear controller while convexly enforcing locality, communication delay and speed constraints \cite{anderson2019system},
which is not possible when optimizing over 
the (linear) controller gains
directly.
In particular, locality constraints allow for large-scale problems to be solved in parallel, with controllers using only local information, leading to 
a scalable controller implementation. 
We leverage \gls{SLS} to generalize the convex controller synthesis to infinite dimensional settings, including deterministic and stochastic \gls{PDE} dynamics and integral equations.

In the weak sense the control input $u^t$ is given via a family of operators $K_x^{t,\tau}:\Ds(K_x^{t,\tau})\subseteq \U\to\X$ and $K_y^{t,\tau}:\Ds(K_y^{t,\tau})\subseteq \U\to\Y$
for $t,\tau\in\{0,\dots,T\}$ such that for all test functions $h\in\cap_{\tau=0}^t \Ds(K_{x,y}^{t,t-\tau})\subseteq \U$,
\begin{align*}
    \<u^t,h>_\U &= \sum_{\tau=0}^t\<x^\tau,K_x^{t,t-\tau} h>_\X \quad \text{for state feedback,} \\
    \<u^t,h>_\U &= \sum_{\tau=0}^t\<y^\tau,K_y^{t,t-\tau} h>_\Y \quad \text{for output feedback}\,.
\end{align*}
We will present results for both state feedback (SF) and output feedback (OF) settings. The controller is $u^t=\sum_{\tau=0}^t (K_x^{t,t-\tau})^* x^\tau$ in the SF setting, and $u^t=\sum_{\tau=0}(K_y^{t,t-\tau})^* y^{\tau}$ in the OF setting.

\begin{remark}
    Taking operator adjoints means switching the spaces for domains and ranges, so generalizing \gls{SLS} to operators in Hilbert spaces switches the conventional source and target spaces from existing \gls{SLS} theory. For example, if $B:\X \to \U$ then $B^*:\U \to \X$. For details, see \cite{showalter_hilbert_2009}. 
\end{remark}

\section{Results}
In this section, we show how to convexly synthesize a linear controller for the dynamics \eqref{root_eq:weak_dynamics}. 
Throughout, we consider a time horizon $T\in \mathbb{N}$ and denote
{\allowdisplaybreaks\begin{align*}
    x &\coloneqq \begin{bmatrix}
        x^0 \\  \vdots \\ x^T
    \end{bmatrix}\in \Xs, \  
    w_x \coloneqq \begin{bmatrix}
        x^0 \\ w^0  \\ \vdots \\ w^{T-1}
    \end{bmatrix}\in \Xs,  \ u = \begin{bmatrix}
        u^0 \\ \vdots \\ u^T
    \end{bmatrix}\in \Us,
    \end{align*}}and operator matrices $\A:\Ds(\A)\subseteq\Xs\to\Xs$, $\K_x:\Ds(\K_x)\subseteq\Us\to\Xs$ 
given by
    \allowdisplaybreaks\begin{align}\label{root_eq:A-K}
        \A  &\coloneqq \begin{bmatrix}
            0 & A & & \\
            & \ddots & \ddots & \\
            & & 0 & A\\
            & & & 0  
        \end{bmatrix}\,,
        \ \ \K_x \coloneqq \begin{bmatrix}
            K_x^{0,0} & \hdots & K_x^{T,T} \\  &  \ddots &  \vdots \\   &  & K_x^{T,0}
        \end{bmatrix}\,,
    \end{align} 
with $\B:\Ds(\B)\subseteq\Xs\to\Us$ defined similarly to $\A$, $\K_y:\Ds(\K_y)\subseteq \Us \to \Ys$ defined similarly to $\K_x$, and $\C:\Ds(\C)\subseteq \Ys\to\Xs$ given by $\C:=\blkdiag(C)$, that is, the matrix $C$ repeated on the block diagonals; all operator matrices are of size $(T+1) \times (T+1)$. Denote $\DD:=\Ds(\A)\cap\Ds(\B)= \X\times (\Ds(A)\cap\Ds(B))^{\otimes T}\subseteq \Xs$.
Using the full time horizon, the dynamics \eqref{root_eq:weak_dynamics} can be written, for all $f\in\DD$ and $g\in\Ds(\C)$, \begin{subequations}\label{root_eq:fullhorizon_dynamics}  \begin{align}
        \<x,f>_\Xs &= \<x,\A f>_\Xs + \<u,\B f>_\Us + \<w_x,f>_\Xs \,,
        \label{root_eq:fullhorizon_dynamics_state}  \\
        \<y,g>_\Ys &= \<x,\C g>_\Xs  + \<w_y,g>_\Ys \label{root_eq:fullhorizon_dynamics_output} \,.
    \end{align}
\end{subequations}    
The relationship between $u$ and $x$ is given by
\begin{subequations}\label{root_eq:controller}
    \begin{align}
    \<u,h>_\Us &= \<x,\K_x h>_\Xs  \quad \forall\, h\in \Ds(\K_x) \text{ for SF}, \label{root_eq:controller_state_fb} \\
    \<u,h>_\Us &= \<y, \K_y h>_\Ys \quad \forall\, h\in\Ds(\K_y) \text{ for OF.} \label{root_eq:controller_output_fb}
\end{align}
\end{subequations}
The matrix structure in \eqref{root_eq:A-K} arises by writing the strong form of the dynamics \eqref{root_eq:strong_dynamics} in vector notation, and then taking adjoints of the resulting operators with respect to $\<\cdot,\cdot>_{\Xs,\Us,\Ys}$. Note that the controller being causal means $\K^*_x$ and $\K^*_y$ are lower block-triangular, and hence $\K_x$ and $\K_y$ are upper block-triangular. 
    
\subsection{State Feedback}
We define closed-loop operators $\px:\Ds(\px)\subseteq\Xs \to \Xs$ and $\pu:\Ds(\pu)\subseteq\Us \to \Xs$ to map between the disturbance and the state and input respectively, in block-operator form
\begin{align}\label{root_eq:px}
    \px &\coloneqq \begin{bmatrix}
        \px^{0,0} &\px^{1,1} & \hdots & \px^{T,T}\\ 
         & \px^{1,0}  & & \px^{T,T-1} \\
          &  & \ddots & \vdots \\
         &  &  & \px^{T,0}
    \end{bmatrix} \,,
\end{align}  
where $\px^{t,\tau}:\Ds(\px^{t,\tau})\subseteq\X \to \X$ and $\pu^{t,\tau}:\Ds(\pu^{t,\tau})\subseteq\U \to \X$ are defined similarly. The adjoints $\px^*$, $\pu^*$ of the \glspl{CLM} $\px$ and $\pu$ are lower block-triangular to ensure causality, hence $\px$, $\pu$ are upper block-triangular, and they parameterize the trajectories $x \in \Xs$ and $u \in\Us$ via
\begin{subequations}\label{root_eq:CLMs_state_fb}
\begin{align}
    \<x,f>_\Xs &= \<w_x, \px f>_\Xs \quad \forall\, f\in\Ds(\px)\subseteq\Xs \,,\label{root_eq:CLMs_state_fb_x} \\
    \<u,h>_\Us &= \<w_x, \pu h>_\Xs \quad \forall\, h \in \Ds(\pu)\subseteq \Us\,. \label{root_eq:CLMs_state_fb_u}
\end{align}
\end{subequations}
In the following theorem, we state one of the main result of this paper: the relationship between the \glspl{CLM} $\px,\pu$ and the SF controller $\K_x$ is given by
\begin{align}\label{root_eq:K-rel_state_fb}
       \< f,\px^{-1} \pu h>_\Xs = \<f, \K_x h>_\Xs \,,
\end{align}
for appropriate test functions $f,h$. The \gls{SLP} in Theorem~\ref{root_thm:state_fb} parameterizes all possible causal $\px,\pu$.
\begin{theorem}[SLP-SF]\label{root_thm:state_fb}
    Fix disturbance function realization $w_x \in \Xs$ and operators $\A, \B$.
\begin{itemize}
    \item[(I)] If $\K_x$ of form \eqref{root_eq:A-K} is given such that $\Rs(\A),\Rs(\B)\subseteq\Ds(\K_x)$ and $\Rs(\K_x)\subseteq\DD$, then
    any trajectory $(x,u) \in \X\times\U$ satisfying the closed-loop dynamics \eqref{root_eq:fullhorizon_dynamics_state}-\eqref{root_eq:controller_state_fb} also satisfies \eqref{root_eq:CLMs_state_fb} with some causal \glspl{CLM} $\px,\pu$ satisfying $\Rs(\A)\subseteq \Ds(\px)$, $\Rs(\B)\subseteq \Ds(\pu)$ and 
\begin{equation}\label{root_eq:SLP_state_feedback} 
        \begin{split}
            \<f,\px \hat f>_\Xs = \<f,\px \A \hat f>_\Xs + \<f,\pu \B \hat f>_\Xs + \<f,\hat f>_\Xs\,, 
            \quad
             \forall\, f\in\Xs\,, \quad \forall\, \hat f\in\DD \,. 
        \end{split}
        \end{equation}

    \item[(II)]
    Let $\px:\Ds(\px)\to\Xs,\pu:\Ds(\pu)\to\Xs$ be arbitrary \glspl{CLM} satisfying \eqref{root_eq:SLP_state_feedback} such that $\px,\pu$ are upper block-triangular (implying that $\px$ is invertible), $\Rs(\A)\subseteq\Ds(\px)$, $\Rs(\B)\subseteq\Ds(\pu)$ and $\Rs(\pu)\subseteq\Ds(\px^{-1})$. Then the corresponding trajectory $(x,u)\in\X\times\U$ computed with \eqref{root_eq:CLMs_state_fb} 
    also satisfies \eqref{root_eq:fullhorizon_dynamics_state}, \eqref{root_eq:controller_state_fb} with the controller $\mathcal{K}_x$ defined by $\K_x:=\px^{-1}\pu$ and $\Ds(\K_x):=\Ds(\pu)$. 
\end{itemize}
\end{theorem}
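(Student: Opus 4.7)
The structure of the argument mirrors the finite-dimensional SLS derivation, but every substitution has to be done in the weak (dual) pairing and checked against the stated domain/range conditions.

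For part (I), I would start from the closed-loop dynamics \eqref{root_eq:fullhorizon_dynamics_state} and eliminate $u$ using the state-feedback relation \eqref{root_eq:controller_state_fb}. The condition $\Rs(\B)\subseteq\Ds(\K_x)$ is exactly what makes $\<u,\B f>_\Us=\<x,\K_x\B f>_\Xs$ legal for $f\in\DD$, and then $\Rs(\K_x)\subseteq\DD$ lets me combine terms inside the pairing with $x$. This yields
\begin{equation*}
\<x,(\id-\A-\K_x\B)\,f>_\Xs \;=\; \<w_x,f>_\Xs \qquad \forall\,f\in\DD.
\end{equation*}
I would then define the CLMs by declaring $\px$ to be the (adjoint of the) inverse of $\id-\A-\K_x\B$ acting causally on $w_x$, so that \eqref{root_eq:CLMs_state_fb_x} holds, and set $\pu:=\px\K_x$ so that \eqref{root_eq:CLMs_state_fb_u} is equivalent to the controller relation via
$\<u,h>_\Us=\<x,\K_x h>_\Xs=\<w_x,\px\K_x h>_\Xs=\<w_x,\pu h>_\Xs$. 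Substituting these two identities back into the dynamics produces \eqref{root_eq:SLP_state_feedback} essentially by inspection.

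For part (II), I would proceed in the opposite direction: given $\px,\pu$ satisfying \eqref{root_eq:SLP_state_feedback} and the range/invertibility hypotheses, define $(x,u)$ by \eqref{root_eq:CLMs_state_fb} and set $\K_x:=\px^{-1}\pu$ on $\Ds(\K_x):=\Ds(\pu)$. The controller identity \eqref{root_eq:controller_state_fb} is immediate:
\begin{equation*}
\<u,h>_\Us=\<w_x,\pu h>_\Xs=\<w_x,\px\K_x h>_\Xs=\<x,\K_x h>_\Xs,
\end{equation*}
where the last equality uses \eqref{root_eq:CLMs_state_fb_x} applied to $\K_x h\in\Ds(\px)$. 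For the dynamics, I would insert \eqref{root_eq:CLMs_state_fb} in the right-hand side of \eqref{root_eq:fullhorizon_dynamics_state} and use \eqref{root_eq:SLP_state_feedback} with $\hat f=f$ to collapse the three pairings into $\<w_x,\px f>_\Xs=\<x,f>_\Xs$.

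The genuine obstacle, and what distinguishes the infinite-dimensional statement from its finite-dimensional prototype, is bookkeeping the domains so that every composition (in particular $\px\A$, $\pu\B$, $\px\K_x$, and $\px^{-1}\pu$) is well posed on the stated test-function class, and ensuring that the causal (upper block-triangular) structure is preserved throughout. Causality of $\px,\pu$ in part (I) should follow by constructing them row-by-row in time, using that $\A$ is block-superdiagonal and $\K_x\B$ is upper block-triangular so that $\id-\A-\K_x\B$ is upper block-triangular with identity on the diagonal, hence invertible block-wise by back-substitution. In part (II), the hypothesis that $\px,\pu$ are upper block-triangular together with $\Rs(\pu)\subseteq\Ds(\px^{-1})$ already guarantees that $\K_x=\px^{-1}\pu$ inherits the upper block-triangular form, so causality is preserved for free; the only nontrivial check is that the domain chain $h\mapsto\K_x h\mapsto\px\K_x h$ is consistent with \eqref{root_eq:CLMs_state_fb_x}, which is precisely what the hypothesis $\Rs(\pu)\subseteq\Ds(\px^{-1})$ supplies.
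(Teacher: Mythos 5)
Your proposal is correct and takes essentially the same route as the paper: part (I) is the identical construction $\px=(\Id-\A-\K_x\B)^{-1}$, $\pu=\px\K_x$ (with the same domain bookkeeping), and the controller identity in part (II) is verified by the same chain of pairings $\<u,h>_\Us=\<w_x,\pu h>_\Xs=\<w_x,\px\K_x h>_\Xs=\<x,\K_x h>_\Xs$. The only deviation is your verification of the dynamics in (II), where you substitute the closed-loop maps into the right-hand side and apply \eqref{root_eq:SLP_state_feedback} with $w_x$ as the outer test function; this is a slightly more direct computation than the paper's change of variables $\tilde f=(\Id-\A-\K_x\B)\hat f$ (and avoids invoking invertibility of that operator), and it is valid under the same hypotheses since $\Rs(\A)\subseteq\Ds(\px)$, $\Rs(\B)\subseteq\Ds(\pu)$ and $\DD\subseteq\Ds(\px)$ make every pairing well defined.
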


\subsection{Output Feedback}
In the output feedback setting, the controller does not have access to full state information, only the observation data $y$. The \gls{CLM} operator matrices 
\begin{align*}
    &\pxx:\Ds(\pxx)\subseteq\Xs \to \Xs\,,\qquad
    \pux:\Ds(\pux)\subseteq \Us \to \Xs\,,\\
    &\pxy:\Ds(\pxy)\subseteq\Xs \to \Ys\,,\qquad
    \puy:\Ds(\puy)\subseteq\Us \to \Ys\,,
\end{align*}
are defined as block-triangular operator matrices with the same structure \eqref{root_eq:px} as $\px$ and $\pu$ in the state feedback setting. As shown in the following theorem, the trajectory $(x,u)\in\Xs\times\Us$ is parameterized by
\begin{subequations}\label{root_eq:CLMs_output_fb}
    \begin{align}
        \<x,f>_\Xs &= \<w_x, \pxx f>_\Xs + \<w_y, \pxy f>_\Ys \,,\label{root_eq:CLMs_output_fb_x}\\
        \<u,h>_\Us &= \<w_x, \pux h>_\Xs + \<w_y, \puy h>_\Ys \,,\label{root_eq:CLMs_output_fb_u} \\
        \forall\ f\, \in \Ds(\pxx)\cap\Ds(\pxy) &\subseteq\Xs\,, \qquad 
        \forall\ h\, \in\Ds(\pux)\cap \Ds(\puy)  \subseteq \Us\,. \notag
    \end{align}
\end{subequations}
We will show a relationship between the closed-loop maps and the controller $\K_y:\Ds(\K_y)\to\Y$ given by
\begin{align}\label{root_eq:K-rel_output_fb}
  \<g ,(\puy - \pxy \pxx^{-1} \pux )h >_\Ys = \< g,\K_y  h>_\Ys\,,
\end{align}
for appropriate test functions $g,h$.
\begin{theorem}[SLP-OF]\label{root_thm:output_fb}
 Fix disturbance function realizations $w_x \in \Xs$, $w_y\in\Ys$ and operators $\A, \B, \C$ with $\Rs(\A)\subseteq\DD$ and $\Rs(\C)=\DD$.
\begin{itemize}
    \item[(I)] If $\K_y$ of form \eqref{root_eq:A-K} is given such that $\Rs(\B)\subseteq\Ds(\K_y)$ and $\Rs(\K_y)\subseteq\Ds(\C)$, then
    any trajectory $(x,u) \in \X\times\U$ satisfying the closed-loop dynamics \eqref{root_eq:fullhorizon_dynamics_state}, \eqref{root_eq:fullhorizon_dynamics_output}, \eqref{root_eq:controller_output_fb} also satisfies \eqref{root_eq:CLMs_output_fb} with some \glspl{CLM} $(\pxx,\pxy,\pux,\puy)$ satisfying
    \begin{align*}
       &\Ds(\pxx)=\Ds(\pxy)=\DD\,,\quad
       \Ds(\pux)=\Ds(\puy)=\Ds(\K_y)\,,\\
       &\Rs(\pxx)=\Rs(\pux)=\DD\,,\quad
       \Rs(\pxy)=\Rs(\puy)= \Rs(\K_y)\,,
    \end{align*}
    and given by
\begin{subequations}\label{root_eq:SLP_output_feedback} 
             \begin{align}
                \<f,\pxx \hat f>_\Xs &=  \<f,\pxx \A \hat f>_\Xs + \<f,\pux \B \hat f>_\Xs + \<f,\hat f>_\Xs\,,\label{eq:SLP_OFa}\\
            \<g,\pxy \hat f>_\Ys &= \<g, \pxy \A \hat f>_\Ys + \<g,\puy \B \hat f>_\Ys\,,
            \label{eq:SLP_OFb}\\
            \<f,\pxx \hat f>_\Xs &= \<f, \A \pxx \hat f>_\Xs + \<f, \C \pxy \hat f >_\Xs + \<f, \hat f>_\Xs\,,
            \label{eq:SLP_OFc}\\
            \<f,\pux h>_\Xs &= \<f, \A \pux h>_\Xs + \<f, \C \puy h>_\Xs\,,\label{eq:SLP_OFd} \\
            &\forall f\in\Xs, g\in\Ys, \hat f\in \DD, h\in \Ds(\pux)\,. \notag
            \end{align}
        \end{subequations}
\item[(II)] Let $(\pxx,\pxy,\pux,\puy)$ be arbitrary \glspl{CLM} satisfying \eqref{root_eq:SLP_output_feedback} with causal upper block-triangular structure (implying that $\pxx$ is invertible),  with
    \begin{align*}
       &\Ds(\pxx)=\Ds(\pxy)=\DD\,,\quad
       \Rs(\B)\subseteq\Ds(\pux)=\Ds(\puy)\,,\\
       &\Rs(\pxx)=\Rs(\pux)=\DD\,,\quad
       \Rs(\pxy)=\Rs(\puy)=\Ds(\C)\,.
    \end{align*} 
    Then the corresponding trajectory $(x,u)\in\X\times\U$ computed with \eqref{root_eq:CLMs_output_fb} and output $y\in\Ys$ computed with \eqref{root_eq:controller_output_fb}
    also satisfy \eqref{root_eq:fullhorizon_dynamics},  with the controller $\K_y$ defined by 
    $$\K_y:=\puy - \pxy (\pxx^{-1}) \pux\,,$$
    and $\Ds(\K_y):=\Ds(\pux)=\Ds(\puy)$; it follows that $\Rs(\K_y)=\Ds(\C)$.
\end{itemize}
\end{theorem}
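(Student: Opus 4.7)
The proof follows the two-part template of Theorem~\ref{root_thm:state_fb} but is substantially more involved because four closed-loop maps appear and the equivalence between the controller $\K_y$ and the factorization $\puy-\pxy\pxx^{-1}\pux$ must be unpacked. Throughout, the plan is to work in the weak formulation and to exploit the causal block-triangular structure: $\A$ has zero block-diagonal, so $I-\A$ is formally invertible by the truncated Neumann series $\sum_{k=0}^{T}\A^k$, and the same device applies to any operator of the form $I-\A-\C\K_y\B$ appearing below, since $\C\K_y\B$ is strictly upper block-triangular when $\K_y$ is upper block-triangular and $\B$ carries the shift structure of $\A$.

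For part (I), given $(x,u,y)$ solving the closed-loop dynamics with controller $\K_y$, I would first use \eqref{root_eq:controller_output_fb} and \eqref{root_eq:fullhorizon_dynamics_output} to rewrite $\<u,\B f>_\Us=\<x,\C\K_y\B f>_\Xs+\<w_y,\K_y\B f>_\Ys$ inside \eqref{root_eq:fullhorizon_dynamics_state}, which is legal since $\Rs(\B)\subseteq\Ds(\K_y)$ and $\Rs(\K_y)\subseteq\Ds(\C)$. This gives
\[
\<x,(I-\A-\C\K_y\B)f>_\Xs=\<w_x,f>_\Xs+\<w_y,\K_y\B f>_\Ys\,,\quad \forall\, f\in\DD\,.
\]
Formally inverting $I-\A-\C\K_y\B$ on $\DD$ produces the weak representation $x=\pxx w_x+\pxy w_y$ with $\pxx:=(I-\A-\C\K_y\B)^{-1}$ and $\pxy:=\K_y\B\pxx$. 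Substituting back into \eqref{root_eq:controller_output_fb} via \eqref{root_eq:fullhorizon_dynamics_output} identifies $\pux=\pxx\C\K_y$ and $\puy=\pxy\C\K_y+\K_y$. Equations \eqref{eq:SLP_OFa}--\eqref{eq:SLP_OFb} are then immediate from $\pxx(I-\A-\C\K_y\B)=I$ together with these definitions, while \eqref{eq:SLP_OFc}--\eqref{eq:SLP_OFd} follow from the dual identities $(I-\A)\pxx=I+\C\pxy$ and $(I-\A)\pux=\C\puy$, both of which drop out of the same formulas. The stated range/domain conditions on the CLMs can then be read off.

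For part (II), given CLMs satisfying \eqref{root_eq:SLP_output_feedback}, I would define $x,u$ via \eqref{root_eq:CLMs_output_fb} and $y$ via the output equation, set $\K_y:=\puy-\pxy\pxx^{-1}\pux$, and verify the dynamics directly. The state equation \eqref{root_eq:fullhorizon_dynamics_state} follows by substituting the CLM representations and using \eqref{eq:SLP_OFa} to cancel the $w_x$ contributions and \eqref{eq:SLP_OFb} to cancel the $w_y$ contributions. What remains is to show that the CLM-prescribed input $u$ satisfies $\<u,h>_\Us=\<y,\K_y h>_\Ys$; after substituting the CLM representation of $y$ this reduces to the two identities $\C\K_y=\pxx^{-1}\pux$ and $\pxy\C\K_y+\K_y=\puy$, both of which drop out after substituting the definition of $\K_y$ and using \eqref{eq:SLP_OFc}--\eqref{eq:SLP_OFd} to rewrite $\C\pxy=\pxx-\A\pxx-I$ and $\C\puy=(I-\A)\pux$.

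The main obstacle I anticipate is functional-analytic rather than algebraic: one must verify that each composition such as $\pxx^{-1}\pux$, $\C\K_y$, or $\pxy\C\K_y$ is admissible, with the intermediate range sitting inside the next domain. The hypotheses $\Rs(\A)\subseteq\DD$, $\Rs(\C)=\DD$, and the listed CLM range inclusions are precisely what make this bookkeeping close in the weak formulation; tracking it systematically across all four SLP identities and across both directions of the theorem is where most of the care will go.
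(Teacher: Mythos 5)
Your proposal matches the paper's proof in all essentials: for part (I) you construct $\pxx=(\Id-\A-\C\K_y\B)^{-1}$, $\pxy=\K_y\B\pxx$, $\pux=\pxx\C\K_y$, $\puy=\pxy\C\K_y+\K_y$ and reduce all four identities in \eqref{root_eq:SLP_output_feedback} to the left/right inverse relations for $\Id-\A-\C\K_y\B$, and for part (II) you verify the closed-loop dynamics from the operator identities extracted from \eqref{root_eq:SLP_output_feedback} (in particular $\C\K_y=\pxx^{-1}\pux$), exactly as the paper does. The only harmless deviation is in part (II), where you take $y$ as defined by the output equation and check the controller relation, while the statement and the paper define $y$ via \eqref{root_eq:controller_output_fb} and check the output equation against $g=\K_y h$ using $\Rs(\K_y)=\Ds(\C)$; the identity being verified is the same.
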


\subsection{Optimization and Structural Constraints}
\label{sec:constraints}
Classical control problems aim to construct controllers that minimize some cost, usually in terms of the state $x$ and input $u$. In the \gls{SLS} framework, it was shown in \cite[Section 2.2.2]{anderson2019system} that optimizing over a weighted Frobenius norm of (finite-dimensional) $\px$ and $\pu$ solves the linear quadratic regulator (LQR) problem. Other common objectives such as $\Hc_\infty$ and $\L_1$
can also be posed in terms of closed-loop maps (see, e.g., \cite{chen2024robust,anderson2019system,dean2020robust} for \gls{SLS}-based robustness guarantees). In the same vein, we propose optimizing over (infinite-dimensional) $\px$ and $\pu$ instead of the state and input. In the state feedback setting, the (infinite-dimensional) convex optimization problem given a cost $J$ is
\begin{align*}
    \min_{\px,\pu} J(\px,\pu) \quad \text{such that }\eqref{root_eq:SLP_state_feedback} \text{ holds,}
\end{align*}
and in the output feedback setting,
\begin{align*}
    \min_{\pxx,\pux,\puy,\pxy} J(\pxx,\pux,\puy,\pxy) \quad \text{such that }\eqref{root_eq:SLP_output_feedback} \text{ holds.}
\end{align*}
Let $\pCLMs$ be the set of closed-loop maps, with $\pCLMs=(\px,\pu)$ in the state feedback setting and $\pCLMs=(\pxx,\pux,\puy,\pxy)$ in the output feedback setting. 
A key feature of the original finite-dimensional \gls{SLS} formulation \cite{anderson2019system} is its ability to impose structural constraints such as communication delay, sensor delay and locality constraints by constraining the support of $\pCLMs$, while maintaining the convexity of the optimization problem.
This proposed infinite-dimensional formulation has the same properties. When restricting to a constraint set $S$, the optimization problem then becomes
\begin{align*}
    \min_{\theta \in S} J(\theta) \quad \text{s.t. } \eqref{root_eq:SLP_state_feedback} \text{ or } \eqref{root_eq:SLP_output_feedback} \text{ holds.}
\end{align*}

The following example shows how to construct such constraint sets $S$.
\begin{example}[Integral Dynamics with Constraints]\label{ex:integral_dynamics_constrained}
    Let $\Omega\subset \R^2$ be a compact set. 
    Consider the system with full state measurements
    $x^t\in L^2(\Omega)$, $u^t\in \R^{n_u}$, parameterized by $a\in L^2(\Omega)$ and $b:\Omega \to \R^{n_u}$,
\begin{align*}
    A^* x^t(\cdot) &= (a \ast x^t)(\cdot) \in L^2(\Omega)\,, \quad \forall \, x^t\in L^2(\Omega)\,,\\
    B^* u^t &= \sum_{l=1}^{n_u}  b(l,z) u^t_l\, \in L^2(\Omega)\,, \quad \forall \, u^t\in \R^{n_u}\,.
\end{align*}
This means \eqref{root_eq:weak_dynamics} is a simplified scattering equation, also known as linear Boltzmann, and is used to model phenomena like bacterial movement \cite[Sec 5.6]{perthame_transport_2007}. The functions $a$ and $b$ have compact support; for $a$, this specifies that $\A$ is \textit{local}, i.e., the state at $z$ affects only nearby states at the next time step. For $b$, the control affects states around the location of each actuator, where the location of the $i^{th}$ of $n_u$ actuators is located at $\hat z^{(i)}\in\R^2$. Assume $\supp a(\cdot) = \{ z\in\Omega ~|~ \norm{z}_2 \le r \}$ and $\supp b(l,\cdot) = \{ z \in \Omega \ | \ \norm{z-\hat z^{(l)}}_2 \le r \}$,
where $r$ is sufficiently small.
For this problem set-up, we optimize over causal operators $\pCLMs^*=(\px^*,\pu^*)$ that can be expressed via kernels: they are upper block-triangular as in \eqref{root_eq:px}, and for $f\in L^2(\Omega)$ and $z\in\Omega$,
\begin{align*}
   &((\px^{t,\tau})^*f)(z)=\int_\Omega \pkx^{t,\tau}(\tilde z,z) f(\tilde z)\d \tilde z\,,\\
   &((\pu^{t,\tau})^*f)_l=\int \pku^{t,\tau}(\tilde z, \hat z^{(l)}) f(\tilde z)\d\tilde z\,,
\end{align*}
with kernels $\pkx^{t,\tau}(\tilde z,z)\in L^2(\Omega\times\Omega)$, $\pku^{t,\tau}(\tilde z,z)\in L^2(\Omega,C(\Omega))$.
To impose locality constraints on $\px^*$ and $\pu^*$, we fix the constraint set $S$ to operators with restricted support of their corresponding kernels: $\supp \pkx^{t,\tau}=\supp \pku^{t,\tau} = \left\{\tilde z, z \in \Omega\ \big | \ \norm{\tilde z - z}_2 \le r \right\}$.
\end{example}
Note that in the general linear operator setting, the system is often not controllable due to under-actuation. This makes it more difficult to ensure that the dynamics and the support constraints are feasible, i.e., there exists some $\pCLMs^*$ that satisfies both. An interesting question for future work is rigorously defining notions of controllability on subspaces.

\subsection{Parallel Solving for Integral Transforms}\label{sec:Parallel}
Consider the state space $\X=L^2(\Omega)$ for some $\Omega\subset\R^{d}$, and consider \glspl{CLM} with the following integral transform structure: for $f\in\X$,
\begin{align*}
   &(\px^{t,\tau})^*f=\int_\Omega \pkx^{t,\tau}(\tilde z,\cdot) f(\tilde z)\d \tilde z\,,\\
   &(\pu^{t,\tau})^*f=T_u \circ \int_\Omega \pku^{t,\tau}(\tilde z,\cdot) f(\tilde z)\d\tilde z\,,
\end{align*}
where $T_u:\V\to\U$ for a suitable functional space $\V$ over $\Omega$ (not necessarily a Hilbert space), and kernels $\pkx^{t,\tau}\in L^2(\Omega\times\Omega)$, $\pku^{t,\tau}\in L^2(\Omega,\V)$.
We call $\M$ the set of all such admissible causal kernels $\pk=(\pkx^{t,\tau},\pku^{t,\tau})$, and define for fixed $\tilde z\in\Omega$ the set 
$\M_1:=\{\pk\,|\, \pkx^{t,\tau}(\tilde z,\cdot)\in\X, \pku^{t,\tau}(\tilde z,\cdot)\in\V \}$
and for fixed $z\in\Omega$ the set $\M_2:=\{\pk\,|\, \pkx^{t,\tau}(\cdot,z), \pku^{t,\tau}(\cdot,z)\in\X \}$.
Optimizing over \glspl{CLM} is equivalent to optimizing over kernels $\pk\in\M$.
When there exists a feasible set $S$ of locality constraints, it is possible to solve this 
optimization problem in parallel.

Consider cost functions $J_i:\M_i\to\R\cup\{+\infty\}$ and $J:\M\to\R\cup\{+\infty\}$ for $i=\{1,2\}$.
Denote the constraints by $P$, where $P$ collects the controller parametrization \eqref{root_eq:SLP_state_feedback} or \eqref{root_eq:SLP_output_feedback}, and the structural constraints $S$.
\begin{definition}[Disturbance-Parallelizable]\label{def:dist-para} 
$J$ and $P$ are \textit{disturbance-parallelizable} if there exists an increasing function $\varphi_1$ and norm $\|\cdot\|_1$ such that $J(\pk)=\varphi_1(\|\psi_1\|_1)$ for $\psi_1(\tilde z):=J_1(\pk(\tilde z, \cdot))$, and 
\begin{align*}
    \overline \pk = \argmin_{ \pk(\tilde z,\cdot) \in \M_1} J_1( \pk(\tilde z,\cdot)) \quad \text{s.t. } P(\pk(\tilde z, \cdot))=0 \quad \forall \tilde z\,,
\end{align*} 
results in the same minimizer as the full problem, that is,
\begin{align*}
    \overline{\pk} = \argmin_{ \pk \in \M } J( \pk) \quad \text{s.t. } P(\pk)=0\,.
\end{align*}
\end{definition}
\begin{definition}[$xuy$-Parallelizable]\label{def:xuy-para} 
$J$ and $P$ are \textit{$xuy$-parallelizable} 
if there exists an increasing function $\varphi_2$ and norm $\|\cdot\|_2$ such that $J(\pk)=\varphi_2(\|\psi_2\|_2)$ for $\psi_2(z):=J_2(\pk( z, \cdot))$, and 
\begin{align*}
    \overline \pk = \argmin_{  \pk(\cdot,z) \in \M_2}J_2( \pk(\cdot,z)) \quad \text{s.t. } P(\pk(\cdot, z))=0 \quad \forall z \,,
\end{align*} 
results in the same minimizer as the full problem, that is,
\begin{align*}
    \overline{\pk} = \argmin_{ \pk \in \M}J( \pk) \quad \text{s.t. } P(\pk)=0\,.
\end{align*}
\end{definition}
Disturbance-parallelizable is analogous to column-separable in finite dimensional \gls{SLS}; $xuy$-parallelizable is analogous to row-separable in finite dimensional \gls{SLS}.
\begin{remark}
If the system is parallelizable or separable into disturbance- and $xuy$-parallelizable components, 
results from \cite{bot_admm_2019}, and finite-dimensional \gls{SLS} \cite{anderson2019system, AmoAlonso_MPC} suggest Alternating Direction Method of Multipliers (ADMM) as a potential distributed solver implementation.
\end{remark}
We will use a disturbance-parallelizable objective and constraint in the following section and illustrate results with parallel computation.
\section{Control Implementation}
\label{sec:implementation}
In what follows, consider the dynamics from Example~\ref{ex:integral_dynamics_constrained},
\begin{equation}
\label{root_eq:dyn_implementation}
x^{t+1}(z) = \int a(z-z') x^t(z') d z' + \sum_{l=1}^{n_u} b(l,z) u_l^t\,.   
\end{equation}
Here, we can take $\V=C(\Omega)$ the continuous functions on $\Omega$, and $T:C(\Omega)\to\R^{n_u}$ to be the evaluation operator at the actuators, $(Tv)_l:=v(\hat z^{(l)})$.

From \eqref{root_eq:SLP_state_feedback}, the dynamics for the \gls{CLM} kernels $\pk$ are 
\begin{align*}
\pkx^{t+1,\tau+1}
(\tilde z, z) &= (A^* \pkx^{t,\tau}(\tilde z,\cdot))(z) + (B^*T_u\circ\pku^{t,\tau}(\tilde z,\cdot))(z) 
% \\
% &\quad 
= \int a(z-z') \pkx^{t,\tau}(\tilde z,z') \d z' + \sum_{l=1}^{n_u} b(l,z) \pku^{t,\tau}
(\tilde z,\hat z^{(l)})\,,\\
\px^{t,0}(\tilde z, z) &= \delta_0(\tilde z- z)\,,
\end{align*}
where $\tilde z$ is the input, $z$ is the output. Define the cost function
\begin{align*}
J(\pk) := \sum_{t,\tau} \left(Q \iint |\pkx^{t,\tau}(\tilde z,z) |^2 \d \tilde z \d z + R \int \norm{T_u\circ\pku^{t,\tau}(\tilde z,\cdot)}^2_2 \d \tilde z \right),
\end{align*}
for scalars $Q>0$ and $R\ge 0$, analogous to LQR for finite-dimensional \gls{SLS}. Note that this means we seek a minimum operator norm solution for $(\px^{t,\tau})^*$ and $(\pu^{t,\tau})^*$, and with this choice the optimization problem and constraints are disturbance-parallelizable according to Definition~\ref{def:dist-para}, that is, we can optimize for each $\tilde z\in\Omega$ 
separately.

We use real Fourier basis functions for $\pkx$, $a$, and $b$
% ; in particular, we use the real Fourier basis
\footnote{For details about the choice of basis, and the CVXPY \cite{diamond2016cvxpy} quadratic program implementation of constraint \eqref{root_eq:SLP_output_feedback} for the dynamics \eqref{root_eq:dyn_implementation}, see \lec{Appendix~\ref{sec:implementation_details} and }\url{https://github.com/LEConger/SLS-evolution-equations}.}.
The basis allows us to solve the optimization problem over Fourier coefficients, rather than over function spaces, with truncation giving an approximate solution.
Additionally, we constrain the support of $\pkx$ and $\pku$ without affecting the convexity of the problem, as shown in Example~\ref{ex:integral_dynamics_constrained}. 
The constraint enforces locality on the disturbance propagation, and the controller. This locality constraint is enforced via basis functions with matching support.

\subsection{Simulation Results}
 The simulation results show selecting an expressive basis and a small number of approximation functions ($k=12$) are sufficient for the results to be accurate within $0.3\%$. The time horizon is $T=5$ with $n_u=16$ controllers over a domain $[-2,2]$ in both directions.
The controller topology and state response to a disturbance at position $(-0.26,  0.56)$ for the first two time steps are shown in Figure~\ref{fig:system_response}. The state is hit with a disturbance at time $t=0$, and the nearest controller responds at time $t=1$. By the second time step, the state is nearly back to zero. 
\begin{figure}[!ht]
    \centering
    \includegraphics[width=0.60\linewidth]{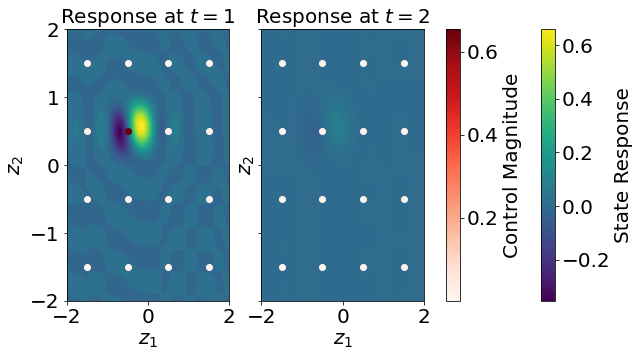}
    \caption{Optimal state response (continuous green shade) and control magnitude (red dot) after $t=1$, and $t=2$.}
    \label{fig:system_response}
\end{figure}

\textbf{Accuracy of basis functions:} To determine the accuracy of the basis functions, the state sequence given the controllers resulting from the basis function coefficient optimization was computed using numerical integration and compared with $\px$. The accuracy at each time step is summarized in Table~\ref{tab:error_and_performance}, with at most $0.23\%$ error from the exact solution.

\begin{table}[!ht]
    \caption{Relative error and performance gain at each time step.}
    \centering
    \begin{tabular}{c|c|c}
        time step & error (\%) & perf. gain (\%)\\
        \hline
        1 & 0.16 & 42.23 \\
        2 & 0.11 & 61.23\\
        3 & 0.17 & 70.93\\
        4 & 0.21 & 77.39\\
        5 & 0.23 & 82.16\\
    \end{tabular}
    \label{tab:error_and_performance}
\end{table}

\textbf{Comparison with uncontrolled system:}
To determine how much the controllers influence the state of the system, we compare the optimal state 
response
with its uncontrolled counterpart, i.e., $\theta_u =0$. The average 2-norm of the state at each time step is given in Table~\ref{tab:error_and_performance}. Compared with the state response with no control, the controller causes the state norm to decrease, e.g., by $42.23\%$ at $t=1$.

\textbf{Parallel control with constraints:} To illustrate how to compute controllers locally for large-scale systems, we increase the state domain and controllers, and we constrain the controllers to respond only to disturbances within a 1-norm radius of length 2. The \gls{SLS} optimization problem is solved in parallel for each of the 15 disturbances. For controllers responding to multiple disturbances, the linearity of the \glspl{CLM} allows us to add the maps together after synthesis. See Figure~\ref{fig:distributed_SLS} to observe the locality constraint held for each disturbance.
As is the case for discrete-space \gls{SLS} \cite{yu2021localized}, the disturbance-separability allows us to compute responses to multiple disturbances in parallel.

\begin{figure}[!ht]
    \centering \includegraphics[width=0.6\linewidth]{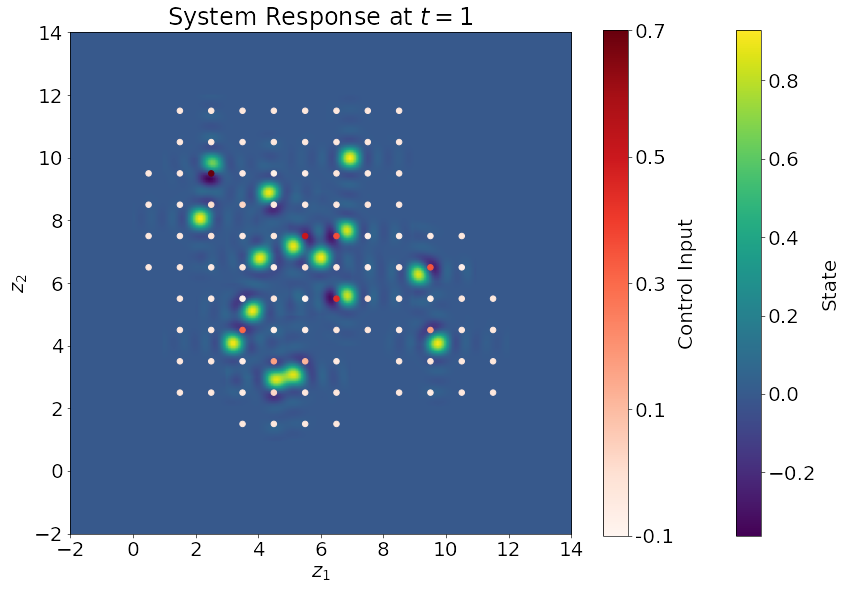} 
    \caption{The state and control responses to each of the 15 disturbances are computed in parallel, and then summed to obtain the final control values. The closed-loop maps are constrained to have support locally around each disturbance.}
    \label{fig:distributed_SLS}
\end{figure}

\textbf{Benefit of continuous space control:}
To highlight the benefits of our approach, we compare its performance against a classic discretize-then-optimize \gls{SLS} formulation. After spatially discretizing the dynamics \eqref{root_eq:dyn_implementation} for different discretization steps $dx$, the optimal discrete controller computed with finite-dimensional \gls{SLS} \cite{anderson2019system} was integrated exactly in the dynamics.
In Table \ref{tab:performance_discretization}, finite-dimensional \gls{SLS} and continuous space \gls{SLS} are compared at each time step against the exact response of a zeros input controller.

In particular, our approach leads to higher performance for this numerical example, compared to its finite-dimensional counterpart \cite{anderson2019system} for all considered discretizations, highlighting the benefits of our approach for continuous-space systems.
Importantly, tailored (finite-dimensional) \gls{SLS} solvers achieve a state dimension scalability of $\mathcal{O}(n_x^3)$ \cite{LEEMAN2024_fastSLS}, whereas our approach remains independent of the discretization used.
\begin{table}[ht!]
    \caption{Performance comparison with finite-dimensional \gls{SLS} \cite{anderson2019system}.}
    \centering
    \begin{tabular}{c|c|c}
        discretization step &avg  perf. gain (\%) & state dimension $n_x$ \\
        \hline
        continuous (our approach) & 42.79 & / \\
        $dx = 0.1$ & 32.26 &1600 \\
        $dx = 0.2$ & 30.54 &400\\
        $dx = 0.25$ & 31.91& 256 \\
        $dx = 0.5$ & 37.36 & 64\\
    \end{tabular}
    \label{tab:performance_discretization}
\end{table}
\section{Conclusion}
We proposed a new convex output-feedback controller synthesis framework for infinite-dimensional 
evolution equations, based on \gls{SLS}. The framework allows for convex structural constraints, such as sensor delay, communication delays, and locality constraints. Due to its similarities with finite-dimensional \gls{SLS}, we expect many \gls{SLS} properties to translate to this infinite-dimensional setting, e.g., efficient distributed solving, robustness guarantees, or robust constraint satisfaction.

\section{Acknowledgements}
The authors thank James Anderson, Samir Bennani, John C. Doyle and Dennice Gayme for valuable discussions.

\appendix
\section*{Appendix}
\section{Theorem Proofs}

\sloppy\begin{proof}[Proof of Theorem~\ref{root_thm:state_fb}]
    Proof of (I): Using the weak form of the dynamics \eqref{root_eq:fullhorizon_dynamics_state} and relation \eqref{root_eq:controller_state_fb}, $x$ satisfies 
    \begin{align*}
       & \<x,f>_\Xs = \<x, (\A + \K_x\B) f>_\Xs + \<w_x,f>_\Xs \quad \forall\, f\, \in \DD \\
        &\quad\Rightarrow \quad \<x,(\Id-\A-\K_x\B)f>_\Xs = \<w_x,f>_\Xs\quad \forall\, f\, \in \DD\,. 
    \end{align*}
    The operator matrix $\Id - \A - \K_x\B$ has domain and range  $\Ds(\Id - \A - \K_x\B)=\Rs(\Id - \A - \K_x\B)=\DD$, is upper block-triangular with identity operators on the diagonal, and therefore invertible with inverse given explicitly in terms of $A, B$ and $K_x^{t,\tau}$. Note that also $\Ds((\Id - \A - \K_x\B)^{-1})= \Rs((\Id - \A - \K_x\B)^{-1})=\bar \D$. Denote $\hat f:=(\Id - \A - \K_x\B)  f$.  Hence
    \begin{align*}
        \<x,\hat f>_\Xs = \<w_x,(\Id - \A - \K_x\B)^{-1}\hat f >_\Xs\,,
    \end{align*}
    Define $\px:=(\Id - \A - \K_x\B)^{-1}$ with $\Ds(\px):=\DD$ and $\pu:=\px \K_x$ with $\Ds(\pu)=\Ds(\K_x)$. By assumption, $\Rs(\A)\subseteq\Ds(\K_x)\subseteq\DD$, and so $\Rs(\A)\subseteq\Ds(\px)$. Trivially, $\Rs(\B)\subseteq \Ds(\pu)$. Finally, $\px$ and $\pu$ satisfy \eqref{root_eq:SLP_state_feedback}: for all $f\in\Xs, \hat f\in\DD$,
    \begin{align*}
        &\<f,(\Id - \A - \K_x\B)^{-1} \hat f>_\Xs = \<f,(\Id - \A - \K_x\B)^{-1}\A \hat f>_\Xs  + \< f,(\Id - \A - \K_x\B)^{-1} \K_x \B \hat f>_\Xs + \<f,\hat f >_\Xs \,, \\ 
        &\quad \Leftarrow \quad \< f,(\Id - \A - \K_x\B)^{-1}(\Id - \A - \K_x\B)\hat f >_\Xs = \<f,\hat f>_\Xs\,.
    \end{align*}
    Proof of (II): Since $\px,\pu$ are upper-block-diagonal, and satisfy \eqref{root_eq:SLP_state_feedback}, it follows that $\px^{t,0}=\Id$ for all $t\in\{0,\dots,T\}$; hence $\px$ is invertible. Define $\K_x:=\px^{-1}\pu$ and $\Ds(\K_x):=\Ds(\pu)$. We begin by showing \eqref{root_eq:controller_state_fb}. Indeed, using \eqref{root_eq:CLMs_state_fb}, for any $h\in\Ds(\pu)=\Ds(\K_x)$, we have
    \begin{align*}
        \<u,h>_\Us = \< w_x, \pu h>_\Xs=\< w_x, \px\K_x h>_\Xs
        =  \<x,\K_x h >_\Xs \,.
    \end{align*}
To show \eqref{root_eq:fullhorizon_dynamics_state}, recall the weak form of the \gls{SLP},
    \begin{align*}
        \<f,\px  \hat f >_\Xs = \< f, \px \A \hat f>_\Xs + \< f, \pu \B \hat f>_\Xs + \<f,\hat f>_\Xs \,,
    \end{align*}
    for all $f\in\Xs, \hat f\in\DD$,
    which can be rewritten as
    \begin{align*}
        &\<f,\px (\Id-\A) \hat f>_\Xs - \<f,\pu\B \hat f >_\Xs = \<f,\hat f>_\Xs \\
        &\quad \Rightarrow \quad  \<f,\px \tilde f>_\Xs  = \<f,(\Id-\A-\K_x\B)^{-1}\tilde  f>_\Xs\,,
    \end{align*}
    where we defined $\tilde f :=(\Id-\A-\K_x\B) \hat f$. In fact, thanks to the block-triangular structure of the operators, $\Id-\A-\K_x\B$ is invertible and therefore for any $\tilde f\in\Xs$ there exists $\hat f\in\DD$ such that this relationship holds. 
     Then selecting $f=w_x\in\Xs$, we have for any $x$ solving \eqref{root_eq:CLMs_state_fb_x}, 
    \begin{align*}
        &\<x,\tilde f>_\Xs = \< w_x,\px(\Id - \A - \K_x\B) \hat f>_\Xs  = \<w_x,\hat f>_\Xs\,.
    \end{align*}
    Substituting for $\tilde f$ on the left-hand side and rearranging,
    \begin{align*}
       \<x, \hat f>_\Xs &= \<x,\A\hat f>_\Xs + \<x,\K_x\B\hat f>_\Xs +  \<w_x,\hat f>_\Xs\\
       &= \<x,\A\hat f>_\Xs + \<u,\B\hat f>_\Us +  \<w_x,\hat f>_\Xs + \left(\<x,\K_x\B\hat f>_\Xs - \<u,\B\hat f>_\Us\right)\,.
    \end{align*}
    The last term in brackets vanishes by \eqref{root_eq:controller_state_fb},
   and so the desired system response is achieved. 
\end{proof}
\begin{proof}[Proof of Theorem~\ref{root_thm:output_fb}]
   Proof of (I): Given a causal $\K_y$ and using \eqref{root_eq:controller_output_fb}, the dynamics \eqref{root_eq:fullhorizon_dynamics} can be rearranged: for any $f\in\DD$, 
    \begin{align*}
        &\<x,(\Id-\A)f>_\Xs = \<y, \K_y \B f >_\Ys + \<w_x,f>_\Xs  =\< x, \C \K_y \B f >_\Xs +  \<w_y,\K_y \B f>_\Ys + \<w_x,f>_\Xs  \\
        & \quad \Rightarrow \quad   \<x,(\Id-\A-\C\K_y\B ) f>_\Xs = \<\wy,\K_y\B f >_\Ys + \<w_x,f>_\Xs \,.
    \end{align*}
    The operator matrix $\tilde\A \coloneqq \Id-\A-\C\K_y\B$ has domain $\Ds(\tilde A)=\DD$, range $\Rs(\tilde A)=\DD$, and is invertible because it is upper block-triangular with identity operators on the block diagonal. Define $\hat f:=(\Id-\A-\C\K_y\B )f$. Then
    \begin{align*}
        \<x,\hat f>_\Xs = \<w_y,\K_y\B \tilde\A^{-1}\hat f >_\Ys+ \<w_x,\tilde\A^{-1} \hat f>_\Xs \,.
    \end{align*}
    By \eqref{root_eq:controller_output_fb} and since $\Rs(\K_y)\subseteq\Ds(\C)$, for all $h\in\D(\K_y)$,
    \begin{align*}
        \<u,h>_\Us = \<y,\K_y h>_\Ys = \<x,\C \K_y h>_\Xs  + \<w_y,\K_y h>_\Ys\,.
    \end{align*}
    Define the closed-loop maps 
    \begin{gather*}
        \pxx:=\tilde\A^{-1}\,,\qquad
        \pxy := \K_y\B\tilde\A^{-1}\,, \qquad
        \pux := \tilde\A^{-1} \C \K_y \,, \qquad
        \puy := \K_y\B\tilde\A^{-1} \C \K_y  + \K_y \,.
    \end{gather*}  
    Note that all conditions on domains and ranges for these operators are satisfied thanks to the assumptions. Next, we will show that the closed-loop maps satisfy the \gls{SLP} \eqref{root_eq:SLP_output_feedback}. We will check each of the four equalities. \eqref{eq:SLP_OFa} holds if for all $f\in\Xs,\hat f\in \DD$,
    \begin{align*}
        \<f,\tilde\A^{-1} \hat f>_\Xs = \<f,\tilde\A^{-1} \A \hat f>_\Xs
        + \<f, \tilde\A^{-1} \C \K_y \B \hat f>_\Xs + \<f,\hat f>_\Xs\,,
    \end{align*}
    which is equivalent to
       $ \<f,\tilde\A^{-1}\tilde\A \hat f>_\Xs = \<f,\hat f>_\Xs$.
   Eqn~\eqref{eq:SLP_OFb} holds if for all $g\in\Y, \hat f\in\DD$,
    \begin{align*}
        \<g,\K_y\B\tilde\A^{-1} \hat f>_\Ys = \<g, \K_y\B\tilde\A^{-1} \A \hat f>_\Ys 
        + \<g,(\K_y\B\tilde\A^{-1} \C+ \Id) \K_y \B \hat f>_\Ys  \,,
    \end{align*}
    which reduces to $\<g,\K_y\B \tilde\A^{-1}\tilde\A \hat f>_\Ys = \<g,\K_y \B \hat f>_\Ys$.
    \eqref{eq:SLP_OFc} holds if for all $f\in\Xs, \hat f\in \DD$,
    \begin{align*}
        \<f,\tilde\A^{-1} \hat f>_\Xs = \<f,\A \tilde\A^{-1} \hat f>_\Xs 
        + \<f, \C \K_y\B\tilde\A^{-1} \hat f>_\Xs + \<f,\hat f>_\Xs \,,
    \end{align*}
    which reduces to $ \<f,\tilde\A\tilde\A^{-1} \hat f>_\Xs = \<f, \hat f>_\Xs$.
\eqref{eq:SLP_OFd} holds if for all $f\in\Xs$ and $h\in\Ds(\K_y)$,
    \begin{align*}
        \<f,\tilde\A^{-1} \C \K_y h >_\Xs = \<f,\A \tilde\A^{-1} \C \K_y h>_\Xs 
        + \<f, \C (\K_y\B\tilde\A^{-1} \C \K_y + \K_y) h>_\Xs \,,
    \end{align*}
    which reduces to $ \<f,\tilde\A\tilde\A^{-1} \C \K_y h>_\Xs = \<f,\C \K_y h>_\Xs$.\\
    
    Proof of (II): Consider $\theta=(\pxx,\pxy,\pux,\puy)$ 
    solving the \gls{SLP} \eqref{root_eq:SLP_output_feedback}. Since all $\theta$ operators
    have the structure \eqref{root_eq:px}, it follows that $\pxx^{t,0}=\Id$ for all $t\in\{0,\dots,T\}$; hence $\pxx$ is invertible. 
    Defining the controller $\K_y=\puy - \pxy (\pxx^{-1}) \pux$ with $\Ds(\K_y)=\Ds(\pux)=\Ds(\puy)$,  we will show that $(x,u)$ defined via \eqref{root_eq:CLMs_output_fb} and output feedback $y$ defined via \eqref{root_eq:controller_output_fb} solve \eqref{root_eq:fullhorizon_dynamics}. First, note that $\Rs(\C)=\Rs(\pux$), $\pxx^{-1}$ is bijective on $\DD$ and $\Rs(\pxy)=\Rs(\puy)=\Ds(\C)$; hence $\Ds(\C)=\Rs(\K_y)$.
    From the \gls{SLP} \eqref{root_eq:SLP_output_feedback}, 
    \begin{align}
         \pux \B &= \pxx (\Id-\A)  - \Id \,,  \label{root_eq:pux_B}\\
         \puy \B &= \pxy (\Id-\A)\,, \label{root_eq:puy_B}\\
         \C \pxy  &= (\Id-\A)\pxx - \Id \,, \label{root_eq:C_pxy}
    \end{align}
    weakly hold on $\DD$, and 
    \begin{align}
        \C\puy=(\Id-\A)\pux \label{root_eq:C_puy}\,,
    \end{align}
    weakly on $\Ds(\pux)=\Ds(\puy)$.
To show \eqref{root_eq:fullhorizon_dynamics_state}, we use  \eqref{root_eq:CLMs_output_fb_x} to compute for all $f\in\DD$,
\begin{align*}
 &\<x,(\Id-\A)f>_\Xs  - \<u,\B f>_\Us - \<w_x,f>_\Xs=  \<w_x, \pxx (\Id-\A)f>_\Xs + \<w_y, \pxy (\Id-\A) f>_\Ys  - \<u,\B f>_\Us - \<w_x,f>_\Xs\,.
\end{align*}
The right-hand side vanishes, since by \eqref{root_eq:CLMs_output_fb_u} together with \eqref{root_eq:pux_B} and \eqref{root_eq:puy_B},
\begin{align*}
    &\<u,\B f>_\Us=\<w_x, \pux \B f>_\Xs + \<w_y, \puy \B f>_\Ys =\<w_x, \pxx(\Id-\A) f>_\Xs - \<w_x,f>_\Xs+ \<w_y, \pxy(\Id-\A) f>_\Ys \,,
\end{align*}
using $\Rs(\A)\subseteq\DD$ and $\Rs(\B)\subseteq\Ds(\pux)=\Ds(\puy)$.

To show \eqref{root_eq:fullhorizon_dynamics_output}:
Since $\Ds(\C)=\Rs(\K_y)$, for any $g\in\Ds(\C)$ there exists $h\in\Ds(\K_y)$ such that $g=\K_y h$. Hence from \eqref{root_eq:controller_output_fb} we have
\begin{align*}
    &\<y,g>_\Ys-\<x,\C g>_\Xs-\<w_y,g>_\Ys= \<u,h>_\Us -\<x,\C \K_y h>_\Xs-\<w_y,\K_y h>_\Ys\,.
\end{align*}
We will show that the right-hand side vanishes. Note that $\C \K_y h=\C g\in\DD$ since $\Rs(\C)\subseteq\DD$.
Then from \eqref{root_eq:CLMs_output_fb_x} together with \eqref{root_eq:C_pxy}-\eqref{root_eq:C_puy}, it follows that
\begin{align*}
 &\<x,\C \K_y h>_\Xs   
 = \<w_x, \pxx \C \K_y h>_\Xs + \<w_y, \pxy \C \K_y h>_\Ys\\
 &\ = \<w_x, \pxx \C \puy h>_\Xs
 -\<w_x, \pxx \C \pxy\pxx^{-1}\pux h>_\Xs + \<w_y, \pxy \C \puy h>_\Ys
 - \<w_y, \pxy \C \pxy\pxx^{-1}\pux h>_\Ys\\
 &\ = \<w_x, \pxx (\Id-\A)\pux h>_\Xs
 -\<w_x, \pxx  (\Id-\A)\pxx\pxx^{-1}\pux h>_\Xs+\<w_x,\pux h>_\Xs  - \<w_y, \pxy (\Id-\A) \pxx\pxx^{-1}\pux h>_\Ys\\
 &\quad + \<w_y, \pxy (\Id-\A)\pux h>_\Ys
+\<w_y,\pxy\pxx^{-1}\pux h>_\Ys\\
  &\ =\<w_x,\pux h>_\Xs+\<w_y,\pxy\pxx^{-1}\pux h>_\Ys\,.
\end{align*}
Using \eqref{root_eq:CLMs_output_fb_u}, we have 
\begin{align*}
&\<u,h>_\Us -\<x,\C \K_y h>_\Xs-\<w_y,\K_y h>_\Ys
\\
&\quad =
\<w_x,\pux h>_\Xs+\<w_y,\puy h>_\Ys
- \<w_x,\pux h>_\Xs-\<w_y,\pxy\pxx^{-1}\pux h>_\Ys -\<w_y,\K_y h>_\Ys\\
&\quad=\<w_y,\K_y h>_\Ys-\<w_y,\K_y h>_\Ys=0\,.
\end{align*} \end{proof}

\section{Implementation Details}\label{sec:implementation_details}
In this section, we provide implementation details for our numeric example, particularly regarding the basis function selection and optimization setup. Recall the dynamics from Example~\ref{ex:integral_dynamics_constrained},
\begin{equation*}
x^{t+1}(z) = \int a(z-z') x^t(z') d z' + \sum_{l=1}^{n_u} b(l,z) u_l^t\,.   
\end{equation*}
From \eqref{root_eq:SLP_state_feedback}, the dynamics for the \gls{CLM} kernels $\pk$ are 
\begin{align*}
\pkx^{t+1,\tau+1}
(\tilde z, z) &= (A^* \pkx^{t,\tau}(\tilde z,\cdot))(z) + (B^*T_u\circ\pku^{t,\tau}(\tilde z,\cdot))(z) 
= \int a(z-z') \pkx^{t,\tau}(\tilde z,z') \d z' + \sum_{l=1}^{n_u} b(l,z) \pku^{t,\tau}
(\tilde z,\hat z^{(l)})\,,\\
\px^{t,0}(\tilde z, z) &= \delta_0(\tilde z- z)\,,
\end{align*}
where $\tilde z$ is the input, $z$ is the output. Recall also the cost function
\begin{align*}
J(\pk) := \sum_{t,\tau} \left(Q \iint |\pkx^{t,\tau}(\tilde z,z) |^2 \d \tilde z \d z + R \int \norm{T_u\circ\pku^{t,\tau}(\tilde z,\cdot)}^2_2 \d \tilde z \right),
\end{align*}
for scalars $Q>0$ and $R\ge 0$, analogous to LQR for finite-dimensional \gls{SLS}.
We use real Fourier basis functions for $\vartheta_x$ and $\vartheta_u$. For $z=[z_1,z_2]$,
\begin{align*}
    \varphi_{mn1}(z) &= \sin\frac{2\pi m z_1}{\lambda_1} \sin \frac{2\pi n z_2}{\lambda_2}\,, \qquad \qquad
    \varphi_{mn2}(z) =  \cos\frac{2\pi m z_1}{\lambda_1} \sin \frac{2\pi  n z_2}{\lambda_2}\,, \\
     \varphi_{mn3}(z) &=  \sin\frac{2\pi m z_1}{\lambda_1} \cos \frac{2\pi n z_2}{\lambda_2}\,, \qquad \qquad
      \varphi_{mn4}(z) =  \cos\frac{2\pi m z_1}{\lambda_1} \cos \frac{2\pi n z_2}{\lambda_2} \,.
\end{align*}
The approximations for $a$, $b$, and $\vartheta_x$ are
\begin{align*}
    a(z) &\approx \sum_{m,n=0}^k \sum_{i=1}^4 a_{mni} \varphi_{mni}(z) \,, \qquad 
    b(z) \approx \sum_{m,n=0}^k \sum_{i=1}^4 \begin{bmatrix}
         b_{mni l} \varphi_{mni}(z) \\ \vdots \\ b_{mni n_u} \varphi_{mni}(z)
    \end{bmatrix} \in \R^{n_u}\,, \\
    \vartheta_x^{t}(\tilde z,z) &\approx \sum_{m,n=0}^k \sum_{i=1}^4 \alpha_{mni}(\tilde z) \varphi_{mni}(z)\,,
\end{align*}
for coefficients $a_{mni},b_{mnil},\alpha_{mni}$ defined as
\begin{align*}
    a_{mni} &=\frac{\kappa}{\lambda_1 \lambda_2} \int_{-\lambda_2/2}^{\lambda_2/2} \int_{-\lambda_1/2}^{\lambda_1/2} a(z) \varphi_{mni}(z) \d z_1 \d z_2\,, \qquad
    b_{mnil} =\frac{\kappa}{\lambda_1 \lambda_2} \int_{-\lambda_2/2}^{\lambda_2/2} \int_{-\lambda_1/2}^{\lambda_1/2} b(l,z) \varphi_{mni}(z) \d z_1 \d z_2\,,\\
    \kappa &= \begin{cases}
        1 & \text{if } m=0 \text{ and } n=0   \\
        2 & \text{if } m=0 \text{ or } n = 0 \\
        4 & \text{otherwise.}
    \end{cases}
\end{align*}

Note that this problem is input-separable (analogous to column-separable in finite dimensions) for a Frobenius norm cost function, so we can fix $\tilde z$ and solve for the operator in terms of $z$ only.
For implementation purposes, $\lambda_1$ and $\lambda_2$ must be at least twice the size of the domain in the $x$ and $y$ directions, respectively, to prevent aliasing.
The function $a(z)$ is
\begin{align*}
    a(z) = \begin{cases}
    \frac{1}{2} + \frac{1}{2}\cos \frac{\pi \lVert z \rVert_2 }{r} & \text{ if } \lVert z \rVert_2 \le r \\
    0 & \text{otherwise.}
    \end{cases}
\end{align*}
We numerically integrate to compute the coefficients $a_{mni}$. Because this takes some time, we save the coefficients so that we can load them instead of computing them in the future.
Next we compute the coefficients for $b(z)$, which we select to be equal to
\begin{align*}
b(z) = \begin{bmatrix}
 -a(z-u_{loc}^{(1)}) \\
  \vdots \\
  -a(z-u_{loc}^{(n_u)})
\end{bmatrix}\,,
\end{align*}
where $u_{loc}^{(l)}$ is the location of the $l^{th}$ controller.
Since this function has $n_u$ components, it is slower to compute the coefficients than for $a(z)$, so we suggest running the file 
\texttt{compute\_b\_coefficients\_parallel.py} which parallelizes the computation of the coefficients.
We use the system-level parameterization (SLP) to relate all of the coefficients and controller values. At time $t=0$, the first state response is 
\begin{align*}
\vartheta_x^{0}(\tilde z,z) = \delta(z-\tilde z)\,.
\end{align*}
The state response at time $t=1$, as specified by the SLP, is
\begin{align*}
\vartheta_x^{1}(\tilde z,z) &= \iint a(z-z') \vartheta_x^0(\tilde z,z') \d z + b(z)^\top \vartheta_u(\tilde z) = a(z-\tilde z) + b(z)^\top \vartheta_u(\tilde z)\,.
\end{align*}
Since $\vartheta_x^0(\tilde z,z)=\delta(z-\tilde z)$ is not a free variable, we start by parameterizing $\vartheta_z^1(\tilde z,z)$ in terms of $\vartheta_u^0(\tilde z)$. We will use the basis function approximation. We use the trigonometry identities
\begin{align*}
    \sin\frac{2\pi m}{\lambda_i}(z_i-\tilde{z}_i) &= \sin\frac{2\pi m}{\lambda_i}z_i \cos\frac{2\pi m}{\lambda_i}\tilde{z}_i-\cos\frac{2\pi m}{\lambda_i}z_i \sin\frac{2\pi m}{\lambda_i}\tilde{z}_i \\
    \cos\frac{2\pi m}{\lambda_i}(z_i-\tilde{z}_i) &= \cos\frac{2\pi m}{\lambda_i}z_i \cos\frac{2\pi m}{\lambda_i}\tilde{z}_i+\cos\frac{2\pi m}{\lambda_i}z_i \cos\frac{2\pi m}{\lambda_i}\tilde{z}_i
\end{align*}
to express $a(z-\tilde z)$ in terms of basis functions
\begin{align*}
    a(z-\tilde z) &\approx \sum_{m,n=0}^k \sum_{i=1}^4 a_{mni}\varphi_{mni}(z-\tilde z) =\sum_{m,n=0}^k \sum_{i=1}^k \hat a_{mni}(\tilde z) \varphi_{mni}(z)\,.
\end{align*}
The coefficients $\hat a_{mni}$ are
\begin{align*}
    \hat a_{mni} = \begin{cases}
        a_{mn1} \varphi_4(\tilde z) + a_{mn2} \varphi_3(\tilde z) + a_{mn3}\varphi_2(\tilde z) + a_{mn4}\varphi_1(z) & \text{if } i=1 \\
        -a_{mn2} \varphi_3(\tilde z) + a_{mn2} \varphi_4(\tilde z) - a_{mn3}\varphi_1(\tilde z) + a_{mn4}\varphi_2(z) & \text{if } i=2 \\
        -a_{mn3} \varphi_2(\tilde z) - a_{mn2} \varphi_1(\tilde z) + a_{mn3}\varphi_4(\tilde z) + a_{mn4}\varphi_3(z) & \text{if } i=3 \\
        a_{mn4} \varphi_1(\tilde z) - a_{mn2} \varphi_2(\tilde z) - a_{mn3}\varphi_3(\tilde z) + a_{mn4}\varphi_4(z) & \text{if } i=4 \\
    \end{cases}\,.
\end{align*}
Therefore $\vartheta_x^{(1)}(\tilde z)$ is
\begin{align*}
\vartheta_x^1(\tilde z) = \sum_{m,n=0}^k \sum_{i=1}^4 \alpha_{mni}^{(1)}(\tilde z) \varphi_{mni}(z) = \sum_{m,n=0}^k \sum_{i=1}^4 \left(\hat a_{mni}(\tilde z) + \sum_{l=1}^{n_u} b_{mnil} \vartheta_u^{(0)}(\tilde{z})_l \right) \varphi_{mni}(z)\,.
\end{align*}
Since this must hold for $z$ almost everywhere in the domain, the coefficients must satisfy
\begin{align}
\alpha_{mni}^{(1)}(\tilde z) - \hat a_{mni}(\tilde z) - \sum_{l=1}^{n_u} b_{mnil} \vartheta_u^{(0)}(\tilde{z})=0 \qquad \forall\, m,n\in[0,\dots,k]\,, \ i\in[4]\,. 
\end{align}
For all other time steps, the following parameterization holds:
\begin{align*}
\vartheta_x^{(t+1)}(\tilde z, z) &\approx \sum_{m,n=0}^k \sum_{i=1}^4 \alpha_{mni}^{(t+1)}(\tilde z) \varphi_{mni}(z) \\
 &= \iint \left(\sum_{m,n=0}^k \sum_{i=1}^4 a_{mni} \varphi_i(z-z')\right)\left( \sum_{\hat m,\hat n=0}^k \sum_{\hat i=1}^4 \alpha_{\hat m \hat n \hat i}^{(t)}(\tilde z) \varphi_{\hat m \hat n \hat i}(z') \right) \d z' + b(z)^\top \vartheta_u^{(t)}(\tilde z) \,.
\end{align*}
Since the basis functions are orthogonal for $(m,n,i) \ne (\hat m, \hat n,\hat i)$ for the domain over which the integral is defined, only the $(m,n,i) = (\hat m, \hat n,\hat i)$ terms are nonzero:
\begin{align*}
 \sum_{m,n=0}^k \sum_{i=1}^4 \alpha_{mni}^{(t+1)}(\tilde z) \varphi_{mni}(z) &= \sum_{m,n=0}^k \sum_{i=1}^4  \iint a_{mni}  \varphi_{mni}(z-z')  \alpha_{m  n i}^{(t)}(\tilde z) \varphi_{ mni}(z') \d z' + b(z)^\top \vartheta_u^{(t)}(\tilde z)  \\
 &= \sum_{m,n=0}^k \sum_{i=1}^4 a_{mni} \alpha_{m  n i}^{(t)}(\tilde z) \iint   \varphi_{mni}(z-z')   \varphi_{ mni}(z') \d z' + b(z)^\top \vartheta_u^{(t)}(\tilde z)  \,.
\end{align*}
A character-building computation, which can be done with software such as Mathematica, reduces the integral
\begin{align*}
    \sum_{m,n=0}^k \sum_{i=1}^4 \alpha_{mni}^{(t+1)}(\tilde z) \varphi_{mni}(z) &= 
    \sum_{m,n=0}^k \sum_{i=1}^4 4 \lambda_1 \lambda_2 \bar A_{mn}[i,:] \alpha_{m  n}^{(t)}(\tilde z) \varphi_{mn i}(z)  + b(z)^\top \vartheta_u^{(t)}(\tilde z)  \,,
\end{align*}
where $\bar A_{mn}$, $\alpha_{mn}^{(t)}$ are given by
\begin{align*}
    \bar A_{mn} = \begin{bmatrix}
                    a_{mn4} & a_{mn3} & a_{mn2} & a_{mn1} \\
                    -a_{mn3} & a_{mn4} & -a_{mn1} & a_{mn2} \\
                    -a_{mn2} & -a_{mn1} & a_{mn4} & a_{mn3} \\
                    a_{mn1} & -a_{mn2} & -a_{mn3} & a_{mn4} 
                \end{bmatrix}\,, \qquad 
    \alpha_{mn}^{(t)}(\tilde z) = \begin{bmatrix} 
        \alpha_{mn1}^{(t)}(\tilde z) \\ \alpha_{mn2}^{(t)}(\tilde z) \\ \alpha_{mn3}^{(t)}(\tilde z) \\ \alpha_{mn4}^{(t)}(\tilde z)
                        \end{bmatrix} \,.
\end{align*}
Since this also must hold for $z$ almost everywhere in the domain, the constraint can be written in terms of coefficients
\begin{align}
      \alpha_{mni}^{(t+1)}(\tilde z) -  4 \lambda_1 \lambda_2 \bar A_{mn}[i,:] \alpha_{m  n}^{(t)}(\tilde z)  - \sum_{l=1}^{n_u} b_{mnil} \vartheta_u^{(t)}(\tilde{z})_l  = 0 \qquad \forall\, m,n\in [0,\dots,k]\,,  i\in[4] \,, \tag{2}
\end{align}
The optimization problem we would like to solve, written in terms of the basis functions, is
\begin{align*}
J(\vartheta (\tilde z)) & = \sum_{t=0}^T \left( Q \iint |\vartheta_x^{(t)}(\tilde z,z) |^2 d \tilde z +  R |\vartheta_u^{(t)}(\tilde z)|^2  \right) \\
                     & \approx \sum_{t=0}^T \left( Q \iint \left| \sum_{m,n=0}^k \sum_{i=1}^4 \alpha_{mni}^{(t)} \varphi_{mni}(z) \right |^2 d \tilde z +  R |\vartheta_u^{(t)}(\tilde z)|^2  \right) \,,
\end{align*}
and since the basis functions are orthonormal, the cost can be written in terms of the coefficients $\alpha_{mni}^{(t)}$:
\begin{align*}
J(\vartheta_u(\tilde z),\alpha) \approx \sum_{t=0}^T \sum_{m,n=0}^k \sum_{i=1}^4  \left( Q  (\alpha_{mni}^{(t)})^2  +  R |\vartheta_u^{t,\tau}(\tilde z)|^2  \right) \,.
\end{align*}
The optimization problem that we solve is
\begin{align*}
&\ \argmin_{\vartheta_u(\tilde z),\alpha} J(\vartheta_u(\tilde z),\alpha) \\
&\text{such that } (1),(2) \text{ hold.}
\end{align*}
\printbibliography

@inproceedings{dean2020robust,
  title={Robust guarantees for perception-based control},
  author={Dean, Sarah and Matni, Nikolai and Recht, Benjamin and Ye, Vickie},
  booktitle={Proc. L4DC},
  pages={350--360},
  year={2020},
  organization={PMLR}
}

@article{chen2024robust,
  title={Robust model predictive control with polytopic model uncertainty through system level synthesis},
  author={Chen, Shaoru and Preciado, Victor M and Morari, Manfred and Matni, Nikolai},
  journal={Automatica},
  volume={162},
  pages={111431},
  year={2024},
  publisher={Elsevier}
}

@article{LEEMAN2024_fastSLS,
title = {Fast System Level Synthesis: Robust Model Predictive Control using {Riccati} Recursions},
journal = {IFAC-PapersOnLine},
volume = {58},
number = {18},
pages = {173-180},
year = {2024},
issn = {2405-8963},
doi = {https://doi.org/10.1016/j.ifacol.2024.09.027},
author = {Antoine P. Leeman and Johannes K{\"o}hler and Florian Messerer and Amon Lahr and Moritz Diehl and Melanie N. Zeilinger},
keywords = {Optimization and Model Predictive Control, Robust Model Predictive Control, Real-Time Implementation of Model Predictive Control},
}

@article{diamond2016cvxpy,
  author  = {Steven Diamond and Stephen Boyd},
  title   = {{CVXPY}: {A} {P}ython-embedded modeling language for convex optimization},
  journal = {Journal of Machine Learning Research},
  year    = {2016},
  volume  = {17},
  number  = {83},
  pages   = {1--5},
}

@inproceedings{yu2021localized,
  title={Localized and Distributed ${H}_2$ State Feedback Control},
  author={Yu, Jing and Wang, Yuh-Shyang and Anderson, James},
  booktitle={Proc. ACC},
  pages={2732--2738},
  year={2021},
  organization={IEEE}
}

@book{troltzsch2010optimal,
  title={Optimal control of partial differential equations: theory, methods, and applications},
  author={Tr{\"o}ltzsch, Fredi},
  volume={112},
  year={2010},
  publisher={AMS}
}

@article{anderson2019system,
  title={System level synthesis},
  author={Anderson, James and Doyle, John C and Low, Steven H and Matni, Nikolai},
  journal={Annual Reviews in Control},
  volume={47},
  pages={364--393},
  year={2019},
  publisher={Elsevier}
}

@article{shapiro_windfarm,
author = {Shapiro, Carl R. and Bauweraerts, Pieter and Meyers, Johan and Meneveau, Charles and Gayme, Dennice F.},
title = {Model-based receding horizon control of wind farms for secondary frequency regulation},
journal = {Wind Energy},
volume = {20},
number = {7},
pages = {1261-1275},
keywords = {wind farm control, receding horizon control, secondary frequency regulation, wake modeling, large eddy simulations},
doi = {https://doi.org/10.1002/we.2093},
url = {https://onlinelibrary.wiley.com/doi/abs/10.1002/we.2093},
eprint = {https://onlinelibrary.wiley.com/doi/pdf/10.1002/we.2093},
abstract = {Abstract In this study, we propose the use of model-based receding horizon control to enable a wind farm to provide secondary frequency regulation for a power grid. The controller is built by first proposing a time-varying one-dimensional wake model, which is validated against large eddy simulations of a wind farm at startup. This wake model is then used as a plant model for a closed-loop receding horizon controller that uses wind speed measurements at each turbine as feedback. The control method is tested in large eddy simulations with actuator disk wind turbine models representing an 84-turbine wind farm that aims to track sample frequency regulation reference signals spanning 40â€‰min time intervals. This type of control generally requires wind turbines to reduce their power set points or curtail wind power output (derate the power output) by the same amount as the maximum upward variation in power level required by the reference signal. However, our control approach provides good tracking performance in the test system considered with only a 4\% derate for a regulation signal with an 8\% maximum upward variation. This performance improvement has the potential to reduce the opportunity cost associated with lost revenue in the bulk power market that is typically associated with providing frequency regulation services. Copyright Â© 2017 John Wiley \& Sons, Ltd.},
year = {2017}
}

@article{wen_robust_1989,
	title = {Robust adaptive control in {Hilbert} {Space}},
	volume = {143},
	issn = {0022-247X},
	url = {https://www.sciencedirect.com/science/article/pii/0022247X89900255},
	doi = {10.1016/0022-247X(89)90025-5},
	abstract = {Though great advances have been reported in the field of adaptive control in the past decade, some precise a priori structural information of the plant (at least the order) remains essential for most of the methods proposed. This is unsatisfactory in some applications because of the unmodeled dynamics, uncertain plant structure, and the noisy operating environment. In fact, for many high-performance control system designs, for example, control of large space structural systems, the distributed nature of the plant must be taken into account. These distributed parameter systems are frequently modelled by partial differential equations. Therefore, they must be analyzed in the appropriate infinite-dimensional state space. A particular approach, model reference adaptive control with command generator tracker concepts, adopts a set of assumptions that are not system dimension dependent. The method has been applied successfully to some finite-dimensional systems and shows promise for the infinite-dimensional state space generalization. This paper elevates the theory from the finite dimensions to the finite-dimensional Hilbert Space. The key obstacles for such a transition are noted. This then necessitates a modification of the adaptive control law. Under the modified scheme, the error signal is shown driven to a residue set asymptotically, the size of which depends on how close the nominal closed-loop plant is to positive realness. An added bonus is the robustness with respect to bounded state and output disturbances as well as model perturbation. All these properties are not true in general for the finite-dimensional control law. The example of damped beam equation is included to illustrate the techniques.},
	number = {1},
	urldate = {2024-09-25},
	journal = {Journal of Mathematical Analysis and Applications},
	author = {Wen, John Ting-Yung and Balas, Mark J},
	year = {1989},
	pages = {1--26},
}

@article{slemrod_note_1974,
	title = {A {Note} on {Complete} {Controllability} and {Stabilizability} for {Linear} {Control} {Systems} in {Hilbert} {Space}},
	volume = {12},
	issn = {0036-1402},
	url = {http://epubs.siam.org/doi/10.1137/0312038},
	doi = {10.1137/0312038},
	language = {en},
	number = {3},
	urldate = {2024-09-25},
	journal = {SIAM Journal on Control},
	author = {Slemrod, Marshall},
	year = {1974},
	pages = {500--508},
}

@article{gibson_riccati_1979,
	title = {The {Riccati} {Integral} {Equations} for {Optimal} {Control} {Problems} on {Hilbert} {Spaces}},
	volume = {17},
	issn = {0363-0129, 1095-7138},
	url = {http://epubs.siam.org/doi/10.1137/0317039},
	doi = {10.1137/0317039},
	language = {en},
	number = {4},
	urldate = {2024-09-25},
	journal = {SIAM Journal on Control and Optimization},
	author = {Gibson, J. S.},
	year = {1979},
	pages = {537--565},
}

@book{smyshlyaev_adaptive_2010,
	title = {Adaptive {Control} of {Parabolic} {PDEs}},
	copyright = {De Gruyter expressly reserves the right to use all content for commercial text and data mining within the meaning of Section 44b of the German Copyright Act.},
	isbn = {9781400835362},
	url = {https://www.degruyter.com/document/doi/10.1515/9781400835362/html},
	abstract = {This book introduces a comprehensive methodology for adaptive control design of parabolic partial differential equations with unknown functional parameters, including reaction-convection-diffusion systems ubiquitous in chemical, thermal, biomedical, aerospace, and energy systems. Andrey Smyshlyaev and Miroslav Krstic develop explicit feedback laws that do not require real-time solution of Riccati or other algebraic operator-valued equations. The book emphasizes stabilization by boundary control and using boundary sensing for unstable PDE systems with an infinite relative degree. The book also presents a rich collection of methods for system identification of PDEs, methods that employ Lyapunov, passivity, observer-based, swapping-based, gradient, and least-squares tools and parameterizations, among others. Including a wealth of stimulating ideas and providing the mathematical and control-systems background needed to follow the designs and proofs, the book will be of great use to students and researchers in mathematics, engineering, and physics. It also makes a valuable supplemental text for graduate courses on distributed parameter systems and adaptive control.},
	language = {en},
	urldate = {2024-09-25},
	publisher = {Princeton University Press},
	author = {Smyshlyaev, Andrey and Krstic, Miroslav},
	year = {2010},
	keywords = {Accuracy and precision, Actuator, Adaptive control, Adaptive system, Additive white Gaussian noise, Advection, Algorithm, Asymptotic analysis, Backstepping, Bode plot, Boundary value problem, Bounded operator, Boundedness, Categorization, Change of variables, Coefficient, Computation, Control engineering, Control variable, Controllability, Damping, Derivative, Deterministic system, Differential equation, Dimension (vector space), Dirichlet boundary condition, Discretization, Eigenfunction, Eigenvalues and eigenvectors, Engineering, Error term, Estimation, Estimation theory, Estimator, Explicit formulae (L-function), Extended Kalman filter, Feedback linearization, Finite element method, Frequency domain, Gradient method, Hamiltonâ€“Jacobiâ€“Bellman equation, Identifiability, Identifier, Initial condition, Inspection, Instability, Integral equation, Integrator, Inverse Laplace transform, Invertible matrix, Laplace transform, Least squares, Linear differential equation, Linear programming, Linearization, Lyapunov function, Mathematical optimization, Measurement, Minimum phase, Mixed boundary condition, Modular design, Nonlinear control, Nonlinear functional analysis, Nonlinear system, Notation, Numerical error, Observability, Open problem, Optimal control, Optimal design, Ordinary differential equation, Parameter, Parametric model, Parametrization, Partial differential equation, Phase margin, Pointwise, Polynomial, Predictability, Projection (linear algebra), Rate of convergence, Requirement, Reynolds number, Riccati equation, Robustification, Sensor, Separation principle, Special functions, State observer, Summation, Symbolic computation, System identification, Theorem, Trade-off, Transfer function, Uncertainty, Variable (mathematics), Volterra operator, Vortex shedding, Wave equation},
}

@book{perthame_transport_2007,
	series = {Frontiers in {Mathematics}},
	title = {Transport {Equations} in {Biology}},
	copyright = {http://www.springer.com/tdm},
	isbn = {9783764378417},
	url = {http://link.springer.com/10.1007/978-3-7643-7842-4},
	language = {en},
	urldate = {2024-09-25},
	publisher = {Birkh\"a user},
	author = {Perthame, BenoÃ®t},
	year = {2007},
	doi = {10.1007/978-3-7643-7842-4},
	keywords = {adaptive dynamics, biology, cell motion, chemotaxis, entropy, hyperbolic equation, ordinary differential equations, partial differential equation, population balance equations, population dynamics, renewal equation, transport equations},
}

@article{bot_admm_2019,
	title = {{ADMM} for monotone operators: convergence analysis and rates},
	volume = {45},
	issn = {1572-9044},
	shorttitle = {{ADMM} for monotone operators},
	url = {https://doi.org/10.1007/s10444-018-9619-3},
	doi = {10.1007/s10444-018-9619-3},
	abstract = {We propose in this paper a unifying scheme for several algorithms from the literature dedicated to the solving of monotone inclusion problems involving compositions with linear continuous operators in infinite dimensional Hilbert spaces. We show that a number of primal-dual algorithms for monotone inclusions and also the classical ADMM numerical scheme for convex optimization problems, along with some of its variants, can be embedded in this unifying scheme. While in the first part of the paper, convergence results for the iterates are reported, the second part is devoted to the derivation of convergence rates obtained by combining variable metric techniques with strategies based on suitable choice of dynamical step sizes. The numerical performances, which can be obtained for different dynamical step size strategies, are compared in the context of solving an image denoising problem.},
	language = {en},
	number = {1},
	urldate = {2024-09-25},
	journal = {Advances in Comp. Mathematics},
	author = {Bo\c{t}, Radu Ioan and Csetnek, Ern\"o" Robert},
	year = {2019},
	keywords = {47H05, 65K05, 90C25, ADMM algorithm, Convex optimization, Fenchel duality, Monotone operators, Primal-dual algorithm, Subdifferential},
	pages = {327--359},
}

@article{wang_separable_2018,
	title = {Separable and {Localized} {System}-{Level} {Synthesis} for {Large}-{Scale} {Systems}},
	volume = {63},
	issn = {1558-2523},
	doi = {10.1109/TAC.2018.2819246},
	abstract = {A major challenge faced in the design of largescale cyber-physical systems, such as power systems, the Internet of Things or intelligent transportation systems, is that traditional distributed optimal control methods do not scale gracefully, neither in controller synthesis nor in controller implementation, to systems composed of a large number (e.g., on the order of billions) of interacting subsystems. This paper shows that this challenge can now be addressed by leveraging the recently introduced system-level approach (SLA) to controller synthesis. In particular, in the context of the SLA, we define suitable notions of separability for control objective functions and system constraints such that the global optimization problem (or iterate update problems of a distributed optimization algorithm) can be decomposed into parallel subproblems. We then further show that if additional locality (i.e., sparsity) constraints are imposed, then these subproblems can be solved using local models and local decision variables. The SLA is essential to maintain the convexity of the aforementioned problems under locality constraints. As a consequence, the resulting synthesis methods have O(1) complexity relative to the size of the global system. We further show that many optimal control problems of interest, such as (localized) LQR and LQG,H2 optimal control with joint actuator and sensor regularization, and (localized) mixed H2/L1 optimal control problems, satisfy these notions of separability, and use these problems to explore tradeoffs in performance, actuator, and sensing density, and average versus worst-case performance for a large-scale power inspired system.},
	number = {12},
	journal = {IEEE TAC},
	author = {Wang, Yuh-Shyang and Matni, Nikolai and Doyle, John C.},
	year = {2018},
	keywords = {Actuators, Constrained and structured optimal control, Delays, Information exchange, Internet of Things, Optimal control, Optimization, Scalability, decentralized control, large-scale systems, system-level synthesis (SLS)},
	pages = {4234--4249},
}

@article{venkatesh_robust_2000,
	title = {On robust control synthesis and analysis in a {Hilbert} space},
	volume = {39},
	issn = {0167-6911},
	url = {https://www.sciencedirect.com/science/article/pii/S016769119900078X},
	doi = {10.1016/S0167-6911(99)00078-X},
	abstract = {The motivation for this paper stems from the need to develop a uniform framework for addressing problems in identification and robust control. System identification for infinite-dimensional Hilbert spaces has been addressed earlier by the authors. System identification set in an Hilbert space results in uncertain models where the description of non-parametric error is typically a ball belonging to the Hilbert space. The scope of this paper is to complement these results â€” develop robust control-synthesis and analysis results â€” for some special, yet, important cases. In this paper we derive a convex parameterization of robustly stabilizing controllers for LTI discrete-time systems defined on Hilbert spaces. The perturbations are of rank-one type having both real-parametric and non-parametric components. The parameterization allows for imposing other constraints to obtain meaningful performance from the controller. Analysis tools are also developed for robust stability under SISO block-diagonally structured perturbations. The robustness analysis problem reduces to a finite-dimensional LMI verification which makes the procedure extremely efficient.},
	number = {1},
	urldate = {2024-09-25},
	journal = {Systems \& Control Letters},
	author = {Venkatesh, S. R. and Megretski, A. and Dahleh, M. A.},
	year = {2000},
	pages = {1--12},
}

@article{cattafesta_actuators_2011,
	title = {Actuators for {Active} {Flow} {Control}},
	volume = {43},
	issn = {0066-4189, 1545-4479},
	url = {https://www.annualreviews.org/doi/10.1146/annurev-fluid-122109-160634},
	doi = {10.1146/annurev-fluid-122109-160634},
	abstract = {Actuators are transducers that convert an electrical signal to a desired physical quantity. Active flow control actuators modify a flow by providing an electronically controllable disturbance. The field of active flow control has witnessed explosive growth in the variety of actuators, which is a testament to both the importance and challenges associated with actuator design. This review provides a framework for the discussion of actuator specifications, characteristics, selection, design, and classification for aeronautical applications. Actuator fundamentals are discussed, and various popular actuator types used in low-to-moderate speed flows are then described, including fluidic, moving object/surface, and plasma actuators. We attempt to highlight the strengths and inevitable drawbacks of each and highlight potential future research directions.},
	language = {en},
	number = {1},
	urldate = {2024-09-26},
	journal = {Annual Review of Fluid Mechanics},
	author = {Cattafesta, Louis N. and Sheplak, Mark},
	year = {2011},
	pages = {247--272},
}

@article{joslin_aircraft_1998,
	title = {Aircraft {Laminar} {Flow} {Control}},
	volume = {30},
	issn = {0066-4189, 1545-4479},
	url = {https://www.annualreviews.org/doi/10.1146/annurev.fluid.30.1.1},
	doi = {10.1146/annurev.fluid.30.1.1},
	abstract = {â–ª Abstractâ€‚ Aircraft laminar flow control (LFC) from the 1930s through the 1990s is reviewed and the current status of the technology is assessed. Examples are provided to demonstrate the benefits of LFC for subsonic and supersonic aircraft. Early studies related to the laminar boundary-layer flow physics, manufacturing tolerances for laminar flow, and insect-contamination avoidance are discussed. LFC concept studies in wind-tunnel and flight experiments are the major focus of the paper. LFC design tools are briefly outlined for completeness.},
	language = {en},
	number = {1},
	urldate = {2024-09-26},
	journal = {Annual Review of Fluid Mechanics},
	author = {Joslin, Ronald D.},
	year = {1998},
	pages = {1--29},
}

@book{Hinze2012,
author="Hinze, Michael
and R{\"o}sch, Arnd",
title="Discretization of Optimal Control Problems",
bookTitle="Constrained Optimization and Optimal Control for Partial Differential Equations",
year="2012",
publisher="Springer",
abstract="Solutions to optimization problems with pde constraints inherit special properties; the associated state solves the pde which in the optimization problem takes the role of a equality constraint, and this state together with the associated control solves an optimization problem, i.e., together with multipliers satisfies first- and second-order necessary optimality conditions. In this note we review the state of the art in designing discrete concepts for optimization problems with pde constraints with emphasis on structure conservation of solutions on the discrete level, and on error analysis for the discrete variables involved. As model problem for the state we consider an elliptic pde which is well understood from the analytical point of view. This allows to focus on structural aspects in discretization. We discuss the approaches First discretize, then optimize and First optimize, then discretize, and consider in detail two variants of the First discretize, then optimize approach, namely variational discretization, a discrete concept which avoids explicit discretization of the controls, and piecewise constant control approximations. We consider general constraints on the control, and also consider pointwise bounds on the state. We outline the basic ideas for providing optimal error analysis and accomplish our analytical findings with numerical examples which confirm our analytical results. Furthermore we present a brief review on recent literature which appeared in the field of discrete techniques for optimization problems with pde constraints.",
isbn="978-3-0348-0133-1",
doi="10.1007/978-3-0348-0133-1_21",
url="https://doi.org/10.1007/978-3-0348-0133-1_21"
}

@incollection{manzoni_quadratic_2021,
	title = {Quadratic {Control} {Problems} {Governed} by {Linear} {Elliptic} {PDEs}},
	isbn = {9783030772260},
	url = {https://doi.org/10.1007/978-3-030-77226-0_5},
	abstract = {The examples we have chosen in Chap. 1give a flavor of the wide range of problems that can be tackled within the optimal control framework. In this chapter we focus on the case of linear-quadratic OCPs, namely OCPs involving a quadratic cost functional and a state problem given by a linear elliptic PDE.},
	language = {en},
	urldate = {2024-09-26},
	booktitle = {Optimal {Control} of {Partial} {Differential} {Equations}: {Analysis}, {Approximation}, and {Applications}},
	publisher = {Springer International Publishing},
	author = {Manzoni, Andrea and Quarteroni, Alfio and Salsa, Sandro},
	editor2 = {Manzoni, Andrea and Quarteroni, Alfio and Salsa, Sandro},
	year = {2021},
	doi = {10.1007/978-3-030-77226-0_5},
	pages = {103--165},
}

@inproceedings{ayamou_finite-dimensional_2024,
	title = {Finite-dimensional homogeneous boundary control for a {1D} reaction-diffusion equation},
	url = {https://hal.science/hal-04695713},
	abstract = {In this paper, we address the problem of finite dimensional boundary stabilization of the 1D reaction-diffusion equation. Using the modal decomposition approach, we propose a finite-dimensional homogeneous controller, stabilizing the unstable dynamics while ensuring the stability of the residual part. The closed-loop system with homogeneous feedback is well-posed and stable. The proposed controller is proven superior to a finite-dimensional linear feedback controller in terms of closed-loop performance. The numerical simulations are presented to support the analytical results.},
	urldate = {2024-09-26},
	booktitle = {Proc. 63rd {IEEE} CDC},
	author = {Ayamou, Mericel and Espitia, Nicolas and Polyakov, Andrey and Fridman, Emilia},
	year = {2024},
}

@article{si_boundary_2018,
	title = {Boundary control for a class of reaction-diffusion systems},
	volume = {15},
	issn = {1751-8520},
	url = {https://doi.org/10.1007/s11633-016-1012-4},
	doi = {10.1007/s11633-016-1012-4},
	abstract = {Boundary control for a class of partial integro-differential systems with space and time dependent coefficients is considered. A control law is derived via the partial differential equation (PDE) backstepping. The existence of kernel equations is proved. Exponential stability of the closed-loop system is achieved. Simulation results are presented through figures.},
	language = {en},
	number = {1},
	urldate = {2024-09-26},
	journal = {International Journal of Automation and Computing},
	author = {Si, Yuan-Chao and Xie, Cheng-Kang and Zhao, Na},
	year = {2018},
	keywords = {Artificial Intelligence, Stability, backstepping, boundary control, partial differential equation, reaction-diffusion system},
	pages = {94--102},
}

@article{vazquez_boundary_2019,
	title = {Boundary control and estimation of reaction-diffusion equations on the sphere under revolution symmetry conditions},
	volume = {92},
	issn = {0020-7179, 1366-5820},
	url = {https://www.tandfonline.com/doi/full/10.1080/00207179.2017.1286691},
	doi = {10.1080/00207179.2017.1286691},
	language = {en},
	number = {1},
	urldate = {2024-09-26},
	journal = {Int. Journal of Control},
	author = {Vazquez, Rafael and Krstic, Miroslav},
	year = {2019},
	pages = {2--11},
}

@article{liu_non-commutative_2019,
	title = {Non-commutative discretize-then-optimize algorithms for elliptic {PDE}-constrained optimal control problems},
	volume = {362},
	issn = {0377-0427},
	url = {https://www.sciencedirect.com/science/article/pii/S0377042718304485},
	doi = {10.1016/j.cam.2018.07.028},
	abstract = {In this paper, we analyze the convergence of several optimize-then-discretize and discretize-then-optimize algorithms, based on either a second-order or a fourth-order finite difference discretization, for solving elliptic PDE-constrained optimal control problems. To ensure the convergence of a discretize-then-optimize algorithm, one well-accepted criterion is to design the discretization scheme such that the resulting discretize-then-optimize algorithm commutes with the corresponding optimize-then-discretize algorithm. In other words, both algorithms should give rise to exactly the same discrete optimality system. However, such a restrictive criterion is not trivial to fulfill. By investigating a distributed control problem governed by an elliptic equation, we first show that enforcing such a stringent condition of commutative property is only sufficient but not necessary for achieving the desired convergence. We then introduce some suitable H1 semi-norm penalty/regularization terms to recover the lost convergence due to the inconsistency caused by the loss of commutativity. Numerical experiments are carried out to verify our theoretical analysis and also validate the effectiveness of our proposed regularization techniques.},
	urldate = {2024-09-26},
	journal = {Journal of Comp. and Applied Mathematics},
	author = {Liu, Jun and Wang, Zhu},
	year = {2019},
	keywords = {Discretize-then-optimize, Elliptic optimal control, Finite difference method, Optimize-then-discretize, PDE-constrained optimization, Regularization},
	pages = {596--613},
}

@ARTICLE{AmoAlonso_MPC,
  author={Amo Alonso, Carmen and Li, Jing Shuang and Anderson, James and Matni, Nikolai},
  journal={IEEE Trans. on Control of Network Systems}, 
  title={Distributed and Localized Model-Predictive Control–Part {I}: Synthesis and Implementation}, 
  year={2023},
  volume={10},
  number={2},
  pages={1058-1068},
  keywords={Optimization;Control systems;Predictive control;Network systems;Heuristic algorithms;Prediction algorithms;Information exchange;Closed loop systems;control system synthesis;decentralized control;networked control systems;optimal control},
  doi={10.1109/TCNS.2022.3219770}}

@article{showalter_hilbert_2009,
	title = {Hilbert {Space} {Methods} for {Partial} {Differential} {Equations}},
	copyright = {Copyright (c) 2023 Ralph E. Showalter},
	issn = {1072-6691},
	url = {https://ejde-ojs-txstate.tdl.org/ejde/article/view/30},
	doi = {10.58997/ejde.mon.01},
	abstract = {This book is an outgrowth of a course which we have given almost periodically over the last eight years. It is addressed to beginning graduate students of mathematics, engineering, and the physical sciences. Thus, we have attempted to present it while presupposing a minimal background: the reader is assumed to have some prior acquaintance with the concepts of ``linear ''and ``continuous'' and also to believe ({\textbackslash}L{\textasciicircum}2{\textbackslash}) is complete. An undergraduate mathematics training through Lebesgue integration is an ideal background but we dare not assume it without turning away many of our best students. The formal prerequisite consists of a good advanced calculus course and a motivation to study partial differential equations.
For more information see https://ejde.math.txstate.edu/Monographs/01/abstr.html},
	language = {en},
	urldate = {2024-09-26},
	journal = {Electronic Journal of Diff. Equations},
	author = {Showalter, Ralph E.},
	year = {2009},
	pages = {01--214},
}

@inproceedings{conger_characterizing_2024,
	title = {Characterizing {Controllability} and {Observability} for {Systems} with {Locality}, {Communication}, and {Actuation} {Constraints}},
	author = {Conger, Lauren and Li, Yiheng and Mazumdar, Eric and Wierman, Adam},
	year = {2024},
	booktitle = {Proc. 63rd {IEEE} CDC},
}

@article{ascencio_backstepping_2018,
	title = {Backstepping {PDE} {Design}: {A} {Convex} {Optimization} {Approach}},
	volume = {63},
	issn = {1558-2523},
	shorttitle = {Backstepping {PDE} {Design}},
	url = {https://ieeexplore.ieee.org/abstract/document/8052131},
	doi = {10.1109/TAC.2017.2757088},
	abstract = {Backstepping design for boundary linear partial differential equation (PDE) is formulated as a convex optimization problem. Some classes of parabolic PDEs and a first-order hyperbolic PDE are studied, with particular attention to nonstrict feedback structures. Based on the compactness of the Volterra- and Fredholm-type operators involved, their Kernels are approximated via polynomial functions. The resulting Kernel-PDEs are optimized using sum-of-squares decomposition and solved via semidefinite programming, with sufficient precision to guarantee the stability of the system in the L2-norm. This formulation allows optimizing extra degrees of freedom where the Kernel-PDEs are included as constraints. Uniqueness and invertibility of the Fredholm-type transformation are proved for polynomial Kernels in the space of continuous functions. The effectiveness and limitations of the approach proposed are illustrated by numerical solutions of some Kernel-PDEs.},
	number = {7},
	urldate = {2024-10-01},
	journal = {IEEE TAC},
	author = {Ascencio, Pedro and Astolfi, Alessandro and Parisini, Thomas},
	month = jul,
	year = {2018},
	keywords = {Backstepping, Backstepping partial differential equation (PDE) design, Convex functions, Hilbert space, Integral equations, Kernel, Mathematical model, Tools, convex optimization, distributed parameter systems (DPSs), sum-of-squares (SOS)},
	pages = {1943--1958},
}

@phdthesis{jensen_topics_2020,
	type = {Thesis},
	title = {Topics in {Optimal} {Distributed} {Control}},
	school = {University of California Santa Barbara},
	author = {Jensen, Emily},
	year = {2020},
}

@article{bamieh_distributed_2002,
	title = {Distributed control of spatially invariant systems},
	volume = {47},
	issn = {1558-2523},
	url2 = {https://ieeexplore.ieee.org/document/1017552},
	doi2 = {10.1109/TAC.2002.800646},
	abstract = {We consider distributed parameter systems where the underlying dynamics are spatially invariant, and where the controls and measurements are spatially distributed. These systems arise in many applications such as the control of vehicular platoons, flow control, microelectromechanical systems (MEMS), smart structures, and systems described by partial differential equations with constant coefficients and distributed controls and measurements. For fully actuated distributed control problems involving quadratic criteria such as linear quadratic regulator (LQR), H/sub 2/ and H/sub /spl infin//, optimal controllers can be obtained by solving a parameterized family of standard finite-dimensional problems. We show that optimal controllers have an inherent degree of decentralization, and this provides a practical distributed controller architecture. We also prove a general result that applies to partially distributed control and a variety of performance criteria, stating that optimal controllers inherit the spatial invariance structure of the plant. Connections of this work to that on systems over rings, and systems with dynamical symmetries are discussed.},
	number = {7},
	urldate2 = {2024-10-02},
	journal = {IEEE TAC},
	author = {Bamieh, B. and Paganini, F. and Dahleh, M.A.},
	month = jul,
	year = {2002},
	keywords = {Control systems, Distributed control, Distributed parameter systems, Fluid flow measurement, Intelligent structures, Microelectromechanical systems, Micromechanical devices, Optimal control, Partial differential equations, Vehicle dynamics},
	pages = {1091--1107},
}
\end{document}